 \def\l@subsection{\@tocline{2}{0pt}{4pc}{6pc}{}}
\def\l@subsubsection{\@tocline{3}{0pt}{8pc}{8pc}{}}
\numberwithin{equation}{section}
\newcommand{\C}{\mathbb{C}}
\newcommand{\R}{\mathbb{R}}
\newcommand{\Z}{\mathbb{Z}}
\newcommand{\T}{\mathbb{T}}
\DeclareMathOperator{\pr}{Pr}
\DeclareMathOperator{\id}{Id}
\DeclareMathOperator{\ann}{Ann}
\DeclareMathOperator{\img}{img}
\newtheorem{theorem}{Theorem}[section]
\newtheorem{corollary}[theorem]{Corollary}
\newtheorem{lemma}[theorem]{Lemma}
\newtheorem{prop}[theorem]{Proposition}
\theoremstyle{definition}
\newtheorem{definition}[theorem]{Definition}
\newtheorem{remark}[theorem]{Remark}
\newtheorem{example}[theorem]{Example}
\begin{document}

\title[Generalized complex structure on principal torus bundles ]{Generalized complex structure on certain principal torus bundles}
\author[D. Pal]{Debjit Pal}

\address{Department of Mathematics, Indian Institute of Science Education and Research, Pune, India}

\email{debjit.pal@students.iiserpune.ac.in; mathdebjit@gmail.com}

\author[M. Poddar]{Mainak Poddar}

\address{Department of Mathematics, Indian Institute of Science Education and Research, Pune, India}

\email{mainak@iiserpune.ac.in}

\subjclass[2020]{Primary: 53D18, 57R22, 57R30.}

\keywords{ Generalized complex structure, generalized Dolbeault cohomology,  generalized Darboux theorem, principal bundles.}

\begin{abstract} A principal torus bundle over a complex manifold with even dimensional fiber and characteristic class of type $(1,1)$ admits a family of regular generalized complex structures (GCS) with the fibers as leaves of the associated symplectic foliation. We show that such a generalized complex structure is equivalent to the product of the complex structure on the base and the symplectic structure on the fiber in a tubular neighborhood of an arbitrary fiber if and only if the bundle is flat. This has consequences for the generalized Dolbeault cohomology of the bundle that includes a K\"{u}nneth formula. On a more general note, if a principal bundle over a complex manifold with a symplectic structure group admits a GCS with the fibers of the bundle as leaves of the associated symplectic foliation, and the GCS is equivalent to a product GCS in a neighborhood of every fiber, then the bundle is flat and symplectic.    \end{abstract}

\maketitle

\renewcommand\contentsname{\vspace{-1cm}}
\tableofcontents

\section{Introduction}\label{intro}

Generalized complex (GC) geometry presents a unified framework for a range of geometric structures whose two extreme cases are complex and symplectic structures. The notion was introduced by Hitchin \cite{Hit} and developed to a large extent by his doctoral students Gualtieri \cite{Gua, Gua2} and Cavalcanti \cite{Cavth}. A generalized complex structure (GCS) induces a possibly singular foliation with symplectic leaves.  A point on a GC manifold is called regular if the dimension of the symplectic leaf is constant near it. In the neighborhood of a regular point, the GCS induces a complex structure on the leaf space of the symplectic foliation. Moreover, by a generalized Darboux theorem due to Gualtieri, the GCS is equivalent to the product of a leaf-wise symplectic structure and a transverse complex structure in a neighborhood of a regular point.

Although there are several examples of generalized complex structures that are neither symplectic nor complex, they require some effort to construct.  In his thesis \cite{Cavth},
Cavalcanti observed the existence of a family of generalized complex structures on an even dimensional torus principal bundle over a complex manifold with characteristic class of type $(1,1)$. The symplectic foliation in this case is regular and the leaves are the torus fibers of the bundle. The transverse complex structure on the leaf space coincides with the complex structure on the base manifold. In this article, we study this family more closely. By the Darboux theorem, and the fact that the symplectic structure is invariant along the torus fibers, it is tempting to speculate that the GCS may admit a product description in a tubular neighborhood of an entire fiber.  On the other hand, the proof of the Darboux theorem indicates that the cohomology of the symplectic leaf, i.e., the torus fiber, may be an obstruction to achieving such a product description.

We prove the following result (Theorem \ref{thm:productgcs}): In a trivializing neighborhood of a torus fiber, any GCS belonging to the above family is equivalent to the product of the symplectic structure on the fiber and the complex structure on the base up to diffeomorphisms and $B$-transforms if and only if the principal bundle is flat. In fact, Theorem \ref{thm:productgcs3} shows that if a principal $G$-bundle, where $G$ is a Lie group with a symplectic structure, admits a GCS which is locally equivalent to a product GCS in a neighborhood of every fiber and the symplectic leaves of the GCS are the fibers of the bundle, then the bundle is flat and symplectic.
Using this, we deduce a stronger version of Theorem \ref{thm:productgcs}, namely, Theorem \ref{thm:productgcs2}, which
says that a principal torus bundle over a complex manifold is symplectic and flat if and only if it admits a GCS which is equivalent to a product GCS in a neighborhood of each torus fiber.  

An application of Theorem \ref{thm:productgcs} is that the spectral sequence developed by Angella et al. \cite{angella} can be applied to describe the generalized Dolbeault cohomology of the total space of the bundle. This is explained in the more general setting of symplectic fiber bundles with suitable assumptions on the GCS that are slightly more general than the hypotheses of \cite{angella} (see Theorems \ref{main2} and \ref{main3}). The case of principal torus bundles is stated in Corollary \ref{main4}, and a K\"{u}nneth formula for the generalized Dolbeault cohomology of these bundles is given in Corollary \ref{main5}.     

It may be noted that there are many natural classes of even dimensional manifolds that admit the structure of a torus principal bundle over a complex manifold with characteristic class of type $(1,1)$. These include the 
product of two odd dimensional spheres (more generally,  a large class of moment angle manifolds in toric topology), even dimensional compact connected Lie groups, and total spaces of unitary frame bundles associated with holomorphic vector bundles of even rank, etc. (see Theorem \ref{cxtion} or \cite{PT}.)

\section{Preliminaries}\label{prelim}

We start by recalling the setup of generalized complex geometry. In this section, we rely upon  \cite{Gua} and \cite{Cav05} for most of the definitions and results. To define a GCS on an even dimensional smooth manifold $M$, we need three key ingredients.
Firstly,  given any $2n$-dimensional smooth manifold $M$, the direct sum of the tangent and cotangent bundles of $M$, which we denote by $T\oplus T^{*}$, is endowed with a natural symmetric bilinear form of signature $(2n,2n)$
\begin{equation}\label{bilinear}
    \langle X+\xi,Y+\eta\rangle\,:=\,\frac{1}{2}(\xi(Y)+\eta(X))\,.
\end{equation}
Secondly, we need the \textit{Courant Bracket} on the smooth sections of $T\oplus T^{*}$ which is defined as follows. 
\begin{definition}
The Courant bracket is a skew-symmetric bracket defined on smooth sections of $T\oplus T^{*}$, given by
\begin{equation}\label{bracket}
    [X+\xi,Y+\eta] := [X,Y]_{Lie}+\mathcal{L}_{X}\eta-\mathcal{L}_{Y}\xi-\frac{1}{2}d(i_{X}\eta-i_{Y}\xi),
\end{equation}  
where $X,Y\in\Gamma(T)$, $\xi,\eta\in\Gamma(T^{*})$, $[\,,\,]_{Lie}$ is the usual Lie bracket on vector fields, and $\mathcal{L}_{X},\,\, i_{X}$ are the Lie derivative and the interior product of forms with respect to the vector field $X$, respectively.
\end{definition}
For the third ingredient, consider the action of $T\oplus T^{*}$  on $\wedge^{\bullet}T^{*}$ defined by $$(X+\xi)\cdot\varphi = i_X\varphi+\xi\wedge\varphi \,.$$  This action can be extended to the Clifford algebra of $T\oplus T^{*}$ corresponding to the natural pairing \eqref{bilinear}. This gives a natural choice for spinors, namely, the exterior algebra of cotangent bundle, $\wedge^{\bullet}T^{*}$.
Define a linear map $\alpha$ on $\wedge^{\bullet}T^{*}$ which acts on decomposable forms by
$$\alpha(a_1\wedge\ldots\wedge a_{i})=a_{i}\wedge\ldots\wedge a_1 \, .$$
\begin{definition}
Given two forms of mixed degree $\sigma_{i}=\sum\sigma^{k}_{i}$, $i=1,2$, where $\deg(\sigma^{k}_{i})=k$, in an $n$-dimensional vector space, we define their pairing, $(\sigma_{1}\,,\,\sigma_{2})$ by
\begin{equation}\label{mukai}
    (\sigma_{1}\,,\,\sigma_{2})=(\alpha(\sigma_{1})\wedge\sigma_{2})_{Top},
\end{equation} where $Top$ indicates the degree $n$ component of the wedge product.
\end{definition}

Now, we are ready to present the notion of the generalized complex structure (GCS) on a $2n$-dimensional smooth manifold $M$ in three equivalent ways.
\begin{definition}\label{gcs}
A \textit{generalized complex structure} or {\it GCS} is determined by any of the following three equivalent sets of data:
\begin{enumerate}
\setlength\itemsep{1em}
     \item A subbundle $L$ of $(T\oplus T^{*})\otimes\C$ which is maximal isotropic with respect to the natural bilinear form \eqref{bilinear}, and involutive with respect to the Courant bracket \eqref{bracket}, and also satisfies $L\cap\Bar{L}=\{0\}$.
    \item  A bundle automorphism $\mathcal{J}$ of $T\oplus T^{*}$ which satisfies the following conditions:
    \begin{itemize}
    \setlength\itemsep{1em}
        \item[(a)] $\mathcal{J}^{2}=-1$
        \item[(b)] $\mathcal{J}^{*}=-\mathcal{J}$, i.e., $\mathcal{J}$ is orthogonal with respect to the natural pairing \eqref{bilinear}
        \item[(c)] $\mathcal{J}$ has vanishing {\it Nijenhuis tensor}, i.e., 
   $$ N(A, B) := -[\mathcal{J}A, \mathcal{J}B] + \mathcal{J} [\mathcal{J}A, B] + \mathcal{J} [A, \mathcal{J} B] 
    + [A, B] =0 $$
    for all $A, B \in\Gamma(T\oplus T^{*})$. 
    \end{itemize}
  
    \item A line subbundle $U$ of $\wedge^{\bullet}T^{*}\otimes\C$ which generated locally at each point by a form of the form $\rho=e^{(B+i\omega)}\wedge\Omega$, such that the pairing \eqref{mukai} $$(\rho\,,\,\Bar{\rho})=\omega^{n-k}\wedge\Omega\wedge\overline{\Omega}\neq 0,$$ where $B$ and $\omega$ are real 2-forms and $\Omega$ is a decomposable complex $k$-form, and  $\rho$ satisfies
    \begin{equation}\label{d rho}
        d\rho=u\cdot\rho,
    \end{equation}
   for some $u\in (T\oplus T^{*})\otimes\C$, where $d$ is the exterior derivative. 
\end{enumerate}

\end{definition}
At each point, the degree of $\Omega$ is called the \textit{type} of the GCS  at that point.  A point near which the type is locally constant is called a \textit{regular point}. If every point is regular, we say that the GCS is regular. The line bundle $U$ that defines the GCS  is called the \textit{canonical line bundle}. 

Given a GCS  $\mathcal{J}$ on a $2n$-dimensional manifold $M$, we get a decomposition of the complex of differential forms as follows: Let $U\subset\wedge^{\bullet}T^{*}\otimes\C$ be the canonical line bundle of $\mathcal{J}$. Then the $+i$-eigenbundle $L$ of $\mathcal{J}$ in $(T\oplus T^{*})\otimes\C$ may be obtained as $$L=\ann(U)=\{ u\in (T\oplus T^{*})\otimes\C\,|\,u\cdot U=0\} \,.$$ 
For each $i\in\Z$,  define $$U^{i}:=\wedge^{n-i}\Bar{L}\cdot U\,\,\subset\wedge^{\bullet}T^{*}\otimes\C.$$
Note that $U^{i}=0$ for each $i<-n$ and $i>n$, and $U^{n}$ is the canonical line bundle $U$. We have $$\wedge^{\bullet}T^{*}\otimes\C=\bigoplus^{n}_{i=-n}U^{i} \,.$$
Denote by $\Gamma(U^{i})$ the  vector space of smooth sections of $U^{i}$. Then \cite[Theorem 4.23]{Gua} implies that
\begin{equation}\label{d eq}
    d: \Gamma(U^{i})\longrightarrow\Gamma (U^{i+1})\oplus\Gamma(U^{i-1}).
\end{equation}
decomposes into two operators as $d=\partial+\Bar{\partial}$. The $\Bar{\partial}$ and $\partial$ operators are defined by composing $d$ with the projections onto
$ \Gamma(U^{i-1})$ and $\Gamma (U^{i+1})$, respectively,
\begin{equation}\label{Bar partial}
    \Bar{\partial}: \Gamma(U^{i})\longrightarrow\Gamma(U^{i-1}), \quad 
    \partial: \Gamma(U^{i})\longrightarrow\Gamma(U^{i+1}) \,.
\end{equation}
 Thus, we obtain a $\Z$-graded differential complex $\{\Gamma(U^{i}),\Bar{\partial}\}$ and the cohomology of this complex is called the generalized Dolbeault cohomology of $M$, 
\begin{equation}\label{D cohomology}
    GH^{\bullet}_{\Bar{\partial}}(M)=\frac{\ker(\Bar{\partial}: \Gamma(U^{\bullet})\longrightarrow\Gamma(U^{\bullet-1}))}{\img(\Bar{\partial}: \Gamma(U^{\bullet+1})\longrightarrow\Gamma(U^{\bullet}))}\,.
\end{equation}

To get an idea about generalized Dolbeault cohomology, it is useful to consider it first for some simple cases as follows. 

\begin{itemize}
\setlength\itemsep{1em}
    \item[(a)] When $M$ is a complex manifold, The canonical line bundle of the GCS  is just $\wedge^{(n,0)}T^{*}$ and $\Bar{L}= T^{1,0}\oplus (T^{*})^{0,1}$. One can see that 
    $$U^{\bullet}=\oplus_{p-q=\bullet}\wedge^{(p,q)}T^{*}.$$
    So in this case, the generalized Dolbeault cohomology is just 
    \begin{equation}\label{complex}
        GH^{\bullet}_{\Bar{\partial}}(M)=\oplus_{p-q=\bullet}H^{q}(M,\Omega^{p}(M))=\oplus_{p-q=\bullet}H^{p,q}(M).
    \end{equation}
    \item[(b)] When $(M,\omega)$ is a symplectic manifold, the canonical bundle is generated by $e^{i\omega}$ and its null space is $L=\{X-i\omega(X,\,\cdot)|\,X\in T\otimes\C\}$. By \cite[Theorem 2.2]{Cav05}, one can see that $$U^{\bullet}=\{e^{i\omega}(e^{\frac{\Lambda}{2i}}\eta)|\, \eta\in\wedge^{n-\bullet}T^{*}\otimes\C\},$$ where $\Lambda$ is the interior product with the bivector $-\omega^{-1}$. Hence, the generalized Dolbeault cohomology is isomorphic to the complex de Rham cohomology of $M$
    \begin{equation}\label{symplectic}
        GH^{\bullet}_{\Bar{\partial}}(M)=H^{n-\bullet}(M;\C).
    \end{equation}
  \end{itemize}
    Given a GCS  $\mathcal{J}$ on a smooth manifold $M$, we can deform it by a closed real $2$-form $B$, known as \textit{$B$-field transformation}, and get another GCS, 
    \begin{equation}\label{B transformation}
        \mathcal{J}_{B}=
        \begin{pmatrix} 
	       1 & 0 \\
	       -B & 1 \\
	    \end{pmatrix} \mathcal{J}
	    \begin{pmatrix} 
	       1 & 0 \\
	       B & 1 \\
	    \end{pmatrix}.
    \end{equation}
   A local section of the canonical line bundle of  $\mathcal{J}_{B}$ is of the form $e^{B}\wedge\rho$ where $\rho$ is a local section of the canonical line bundle of $\mathcal{J}$. Hence, the canonical line bundle of the deformed structure is 
   $$U_{B}=e^{B}\cdot U \, ,$$
   and the $+i$-eigenbundle of $\mathcal{J}_{B}$ is just $$L_{B}=\{X+\xi-B(X,\,\cdot)\,|\,X+\xi\in L\}.$$
   So, for each $i\in\Z$, we get another decomposition 
   $$\wedge^{\bullet}T^{*}\otimes\C=\bigoplus^{n}_{i=-n}U_{B}^{i},$$ where $$U_{B}^{i}=e^{B}U^{i}.$$ Then for $\beta\in\Gamma (U^{i})$,
   $$d(e^{B}\beta)=e^{B}d\beta=e^{B}\partial\beta+e^{B}\Bar{\partial}\beta,$$
   where $e^{B}\partial\beta\in\Gamma (U_{B}^{i+1})$ and $e^{B}\Bar{\partial}\beta\in\Gamma (U_{B}^{i-1})$.
   Hence, 
   \begin{equation}\label{Bar partial B}
       \Bar{\partial}_{B}=e^{B}\Bar{\partial}e^{-B}
   \end{equation} 
   and 
   \begin{equation}\label{partial B}
      \partial_{B}=e^{B}\partial e^{-B} \,.
   \end{equation}
   The cohomology of the $\Z$-graded complex $\{\Gamma(U_{B}^{i}),\Bar{\partial}_{B}\}$, denoted by $GH_{\Bar{\partial}_{B}}(M)$, is defined as 
   \begin{equation}\label{DB cohomology}
    GH^{\bullet}_{\Bar{\partial}_{B}}(M)=\frac{\ker(\Bar{\partial}_{B}: \Gamma(U^{\bullet}_{B})\longrightarrow\Gamma(U^{\bullet-1}_{B}))}{\img(\Bar{\partial}_{B}: \Gamma(U^{\bullet+1}_{B})\longrightarrow\Gamma(U^{\bullet}_{B}))}\,.
\end{equation}
  Hence, by equation \eqref{Bar partial B}, a $B$-field transformation preserves the generalized Dolbeault cohomology of $M$ up to isomorphism
  \begin{equation}\label{isomorphism}
GH^{\bullet}_{\Bar{\partial}_{B}}(M) \cong GH^{\bullet}_{\Bar{\partial}}(M).
  \end{equation}
   
 A situation of interest to us is that of a  symplectic fiber bundle, that is, a smooth fiber bundle $F\hookrightarrow E\xrightarrow{\pi} M$ with a generic fiber $(F,\sigma)$ which is a compact symplectic manifold, such that the structure group is the group of symplectomorphisms of $(F,\sigma)$.  Angella et al. (see \cite[Theorem2.1, Corollary 2.2]{angella}) study the generalized Dolbeault cohomology of a symplectic fiber bundle  using a Leray spectral sequence under the following assumptions:
  \begin{enumerate}
  \setlength\itemsep{1em}
    \item $M$ is a compact complex manifold.
    \item There is a closed form $\omega$ on $E$ which restricts to the symplectic form $\sigma$ on the generic $F$.
\end{enumerate}
 
 In the sequel, we study the generalized Dolbeault cohomology of torus principal bundles over complex manifolds. The GCS considered on the total space of such a bundle is compatible with the complex and symplectic structures on the base and the fibers. However, the symplectic structure on the fibers may vary, and there may not exist a global closed form specializing to the symplectic forms on the fibers. However, in Section \ref{sseq}, we observe that the generalized Dolbeualt cohomology of the total space may be studied using a spectral sequence following \cite{angella} by virtue of the regular neighborhood theorem, Theorem \ref{thm:productgcs}.

\section{GCS on principal bundles}

The following construction of a generalized complex structure on a smooth principal torus bundle over a complex manifold is mentioned as Example 2.16 in the thesis of Cavalcanti \cite{Cavth}. We present a detailed argument for the convenience of the reader.   

\begin{prop}\label{gencx}
	Let $(E, \pi, M)$ be a smooth  principal $\T^{2l}$-bundle over a complex manifold $M$ with characteristic class of type $(1,1)$. Then, the total space $E$ admits a family of regular GCS with the fibers as leaves of the associated symplectic foliation.  
\end{prop}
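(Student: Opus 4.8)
The plan is to build the family explicitly via pure spinors, following Cavalcanti's Example 2.16, and then to check the three conditions of Definition~\ref{gcs}(3). Write $m=\dim_{\C}M$, so $\dim_{\R}E=2(m+l)$. The first step is to choose a convenient principal connection. Since $\T^{2l}$ is abelian, the principal connections on $(E,\pi,M)$ form an affine space over $\Omega^{1}(M;\R^{2l})$, and the curvature of any principal connection $\theta$ has the form $\pi^{\ast}F$ for a closed $\R^{2l}$-valued $2$-form $F=(F_{1},\dots,F_{2l})$ whose de Rham class is the real image of the characteristic class. The hypothesis that this class is of type $(1,1)$ means precisely that each $[F_{j}]$ admits a closed real $(1,1)$-form representative; replacing the $F_{j}$ by such representatives and correcting an arbitrary connection by the corresponding pulled-back $1$-forms, I obtain a principal connection $\theta=(\theta_{1},\dots,\theta_{2l})$ whose curvature components $d\theta_{j}=\pi^{\ast}F_{j}$ are of type $(1,1)$.

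Next I write down the candidate. Choose any constant (translation-invariant) symplectic form $\sigma=\sum_{j<k}\sigma_{jk}\,e_{j}^{\ast}\wedge e_{k}^{\ast}$ on the fibre $\T^{2l}$ and set $\omega_{E}:=\sum_{j<k}\sigma_{jk}\,\theta_{j}\wedge\theta_{k}\in\Omega^{2}(E)$, a globally defined real $2$-form that restricts to $\sigma$ on every fibre. With $K_{M}=\wedge^{m,0}T^{\ast}M$, define the complex line subbundle
$$U:=e^{i\omega_{E}}\cdot\pi^{\ast}K_{M}\ \subset\ \wedge^{\bullet}T^{\ast}E\otimes\C,$$
which over a holomorphic chart $(z_{1},\dots,z_{m})$ on $M$ is generated by $\rho=e^{i\omega_{E}}\wedge\pi^{\ast}(dz_{1}\wedge\cdots\wedge dz_{m})$. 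This $\rho$ already has the form $e^{(B+i\omega)}\wedge\Omega$ required in Definition~\ref{gcs}(3), with $B=0$, $\omega=\omega_{E}$, and $\Omega=\pi^{\ast}(dz_{1}\wedge\cdots\wedge dz_{m})$ a decomposable complex $m$-form.

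Then I verify the three conditions. For nondegeneracy, with $n=m+l$ one has $(\rho,\bar\rho)=\omega_{E}^{\,l}\wedge\pi^{\ast}(dz_{1}\wedge\cdots\wedge dz_{m})\wedge\overline{\pi^{\ast}(dz_{1}\wedge\cdots\wedge dz_{m})}$, which is a nonzero multiple of a local volume form on $E$ (since $\omega_{E}$ restricts to $\sigma$ on the fibres and $\sigma^{l}\neq0$), hence nowhere vanishing. For integrability, holomorphic volume forms are closed, so $d\rho=i\,d\omega_{E}\wedge e^{i\omega_{E}}\wedge\pi^{\ast}\Omega$; each summand of $d\omega_{E}=\sum_{j<k}\sigma_{jk}\bigl(\pi^{\ast}F_{j}\wedge\theta_{k}-\theta_{j}\wedge\pi^{\ast}F_{k}\bigr)$ carries a factor $\pi^{\ast}F_{\bullet}$ with $F_{\bullet}$ of type $(1,1)$, while $\Omega$ has type $(m,0)$, so $F_{\bullet}\wedge\Omega=0$ for bidegree reasons on the $m$-dimensional base; hence $d\omega_{E}\wedge\pi^{\ast}\Omega=0$, and since $e^{i\omega_{E}}$ commutes with all forms in $\wedge^{\bullet}T^{\ast}E\otimes\C$ we get $d\rho=i\,(d\omega_{E}\wedge\pi^{\ast}\Omega)\wedge e^{i\omega_{E}}=0$, so \eqref{d rho} holds, with $u=0$ on each holomorphic chart. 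Finally, the type equals $\deg\Omega=m$ everywhere, so the GCS is regular, and the symplectic leaf through $p\in E$ has tangent space the annihilator in $T_{p}E$ of $\pi^{\ast}\bigl(T^{\ast}_{\pi(p)}M\otimes\C\bigr)$, namely $\ker(d\pi)_{p}$; thus the symplectic foliation is exactly the given fibration, with leafwise symplectic form $\omega_{E}$ restricted to the fibres.

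Letting $\theta$, the constant form $\sigma$, and a closed real $2$-form $B$ (acting by a $B$-transform \eqref{B transformation}) vary produces the asserted family of regular GCS. I expect the only genuinely load-bearing input to be the type $(1,1)$ hypothesis: it is used once to secure a connection with $(1,1)$ curvature, and again---through $F_{\bullet}\wedge\Omega=0$---to force $d\rho=0$ in the integrability check; everything else is routine manipulation of pure spinors using the identities recalled in Section~\ref{prelim}.
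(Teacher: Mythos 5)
Your proposal is correct and follows essentially the same route as the paper: choose a connection with curvature of type $(1,1)$, form the invariant fiberwise symplectic $2$-form from the connection components, take the pure spinor $e^{i\omega}\wedge\pi^{*}\Omega$, and verify nondegeneracy via $\omega^{l}\wedge\pi^{*}\Omega\wedge\pi^{*}\overline{\Omega}\neq 0$ and closedness via the type argument $\chi_j\wedge\Omega=0$. The only differences are cosmetic: you spell out the existence of a $(1,1)$-curvature connection and the identification of the symplectic leaves with the fibers in slightly more detail, and you generate the family by $B$-transforms rather than by inserting the closed form $\eta$ directly into the exponent as the paper does, which amounts to the same thing.
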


\begin{proof} 
	 Consider a connection $(\theta_1, \ldots, \theta_{2l})$ on the principal bundle $E$
	 corresponding to a decomposition $\T^{2l} = \prod_{j=1}^{2l} S^1$ of Lie groups. By the hypothesis, 
  we may choose the connection so that its curvature form is of type $(1,1)$.
  Then, for each $j$ there exists a $2$-form $\chi_j$ of type $(1,1)$ on $M$ such that 
	 \begin{equation}\label{eq1} d \theta_j = \pi^{*} \chi_j \,. \end{equation}
	  Note that
	  \begin{equation}\label{eq2} \omega := \sum_{j=1}^{l} \theta_{2j-1} \wedge \theta_{2j} \end{equation} 
	   is a $\T^{2l}$-invariant $2$-form on $E$ which restricts to an invariant symplectic form on each fiber of $E$.

   Let $\Omega$ be a local  generator of 
	$\wedge^{(n,0)}(T^{*}M \otimes \mathbb{C})$ where $n = \dim_{\mathbb{C}} (M)$. More precisely, if $(z_1, \ldots, z_{n})$ is a system of local holomorphic coordinates on $M$, we may take $$\Omega = dz_1 \wedge \ldots \wedge dz_{n}\,.$$ 	In addition, let $\eta$ be an arbitrary real closed $2$-form on $E$.  Define 
	\begin{equation}\label{eqrho} \rho := e^{ \eta + i \omega } \wedge \pi^{*} \Omega \,. 
		\end{equation}
	  Then, it is clear that  
	   \begin{equation}\label{eqnz} 
	   \pi^{*} \Omega \wedge  \pi^{*}\overline{\Omega} \wedge \omega^{l} \neq 0 \,. 
	   \end{equation} 
	   Moreover, as $d \Omega = 0 $ and $ d \eta= 0 $, we have 
	  \begin{equation}\label{drho} d\rho = e^{\eta} \wedge i \left(e^{i\omega} - \frac{(i\omega)^l}{l!}  \right) d\omega \wedge \pi^{*}\Omega \,. \end{equation}
	   Using \eqref{eq2}, we have
	    $$ d\omega \wedge \pi^{*} \Omega =  \sum_{j=1}^{l}  (\pi^{*} \chi_{2j-1}  \wedge \theta_{2j}  - \theta_{2j-1} \wedge \pi^{*} \chi_{2j}) \wedge \pi^{*}\Omega \,. $$
	 Note that $$\chi_j \wedge \Omega = 0 $$ for each $j$, as $\chi_j$ is of type $(1,1)$ and $\Omega$ is of type $(n,0)$. Hence, 
	 \begin{equation}\label{drz}
	 d\omega \wedge \pi^{*}\Omega = 0 = d \rho \,.
	 \end{equation}	   
	 
By Definition \ref{gcs} (cf. \cite[Theorem 3.38 and Theorem 4.8]{Gua}), \eqref{eqnz} and \eqref{drz} imply that $E$ admits a generalized complex structure whose canonical line bundle is locally generated by $\rho$ (see also \cite[Section 1]{CG}). 
	\end{proof}

Let $K$ be an even-dimensional compact Lie group and let $G$ denote the complexification of $K$. Let $(E_K, \pi, M)$ be a smooth principal $K$-bundle over a complex manifold $M$. 
We say that $(E_K, \pi, M)$ admits a complexification if it can be obtained by a smooth reduction of structure group from a holomorphic principal $G$-bundle $(E_G, \widetilde{\pi}, M)$.

\begin{theorem}\label{cxtion} Let $K$ be an even dimensional compact Lie group and let $G$ denote the complexification of $K$. Let $(E_K, \pi, M)$ be a smooth principal $K$-bundle over a complex manifold $M$, which admits a complexification. Then $E_K$ admits a family of generalized complex structures.  
\end{theorem}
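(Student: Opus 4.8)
The plan is to exhibit $E_K$ as the total space of a principal torus bundle over a complex manifold whose characteristic class is of type $(1,1)$, and then to quote Proposition \ref{gencx}. I begin with a structural observation: for any compact connected Lie group $K$ with maximal torus $T$, the difference $\dim K - \operatorname{rank} K$ equals the number of roots of $(K,T)$, which is even; hence the hypothesis that $\dim K$ is even forces $\operatorname{rank} K = \dim T =: 2l$ to be even, so $T \cong \T^{2l}$. Fix such a $T$ and a Borel subgroup $B \subset G$ with $B \supseteq T^{\C}$. I will use the standard facts that $B$ is a closed complex subgroup of $G$, that $K$ acts transitively on the flag manifold $G/B$ with stabiliser $B \cap K = T$ (so $G = KB$ and $G/B \cong K/T$), and that the inclusion $T \hookrightarrow B$, composed with the abelianisation $B \twoheadrightarrow B/[B,B] \cong T^{\C}$, is the complexification map $T \hookrightarrow T^{\C}$; in particular every character of $T$ is the restriction of a unique holomorphic character of $B$.

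Let $(E_G, \widetilde{\pi}, M)$ be the holomorphic principal $G$-bundle complexifying $E_K$, so that $E_G \cong E_K \times_K G$ as smooth principal $G$-bundles. Set $N := E_G/B$; since $B$ is a closed complex subgroup of $G$ acting freely and properly, $N$ is a complex manifold, $E_G \to N$ is a holomorphic principal $B$-bundle, and $N \to M$ is a holomorphic fibre bundle with fibre $G/B$. Using $E_G \cong E_K \times_K G$ together with $G/B \cong K/T$, one obtains a natural identification $N \cong E_K/T$ under which $E_K \to N$ is a smooth principal $\T^{2l}$-bundle and $E_K \times_T B \cong E_G$ over $N$; thus $E_K \to N$ is a smooth reduction of the holomorphic principal $B$-bundle $E_G \to N$ to the subgroup $T \subset B$.

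Now I claim that the characteristic class of $E_K \to N$ is of type $(1,1)$. Given a character $\lambda$ of $T$, extend it to a holomorphic character of $B$ as above; then the associated smooth line bundle $E_K \times_{T,\lambda} \C$ is isomorphic over $N$ to $E_G \times_{B,\lambda} \C$, which is a holomorphic line bundle. Hence every line bundle associated with $E_K \to N$ through a character of $T$ is holomorphic, so its first Chern class lies in $H^{1,1}(N)$. Choosing a basis of the character lattice $\widehat{T} \cong \Z^{2l}$ identifies these Chern classes with the components of the characteristic class of $E_K \to N$ in $H^2(N;\Z)^{2l}$, so that class is of type $(1,1)$. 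By Proposition \ref{gencx}, the principal $\T^{2l}$-bundle $E_K \to N$ carries a family of regular generalized complex structures, which completes the argument.

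The main obstacle is not a single hard computation but the careful assembly of standard ingredients: that $E_G/B$ is genuinely a complex manifold with $E_G \to E_G/B$ a holomorphic principal $B$-bundle, that the identification $E_G/B \cong E_K/T$ is compatible with all the relevant smooth and holomorphic structures, and that ``characteristic class of type $(1,1)$'' for a torus bundle is correctly detected by the associated line bundles. A minor point to be addressed separately is the reduction to the case where $K$ is connected.
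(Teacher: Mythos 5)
Your argument is correct and follows essentially the same route as the paper: pass to the maximal torus $T\subset K$ and a Borel $B\subset G$, realize $E_K$ as a principal $\T^{2l}$-bundle over the complex manifold $E_G/B\cong E_K/T$, verify that its characteristic class is of type $(1,1)$, and invoke Proposition \ref{gencx}. The only difference is that the paper outsources the existence of the complex structure on $E_G/B$ and of a connection with $(1,1)$ curvature to \cite[Sections 4--5]{PT}, whereas you supply these details directly via the associated holomorphic line bundles $E_G\times_{B,\lambda}\C$ (whose Chern connections give the required $(1,1)$ curvature); both versions are sound.
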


\begin{proof} Let $\T$ be a maximal torus of $K$. Let $B$ be a Borel subgroup of $G$ containing $K$. Then, by \cite[Section 5]{PT} there exists a complex manifold $X = E_G /B$, such that $E_K$ admits the structure of a principal $T$-bundle, $(E_K, \pi', X)$ say, over $X$. Moreover,  this principal $\T$-bundle over $X$ admits a complexification. So, by \cite[Section 4]{PT}, $(E_K, \pi', X)$ admits a $(1,0)$ connection with $(1,1)$ curvature. 
Now, applying Theorem \ref{gencx} to the bundle  $(E_K, \pi', X)$, we conclude that 
$E_K$ admits a family of generalized complex structures. 	 	
\end{proof}
    
    Specific examples of bundles that admit a complexification include the unitary frame bundle associated with a holomorphic vector bundle of even rank over a complex manifold. We refer the reader to \cite[Section 3]{PT} for more examples. The following example was kindly shared with us by Ajay Singh Thakur.
   
   \begin{example}
   	Let $E \rightarrow M$ be a smooth vector bundle of rank $n$ over a complex manifold $M$. Let $\pi:\mathbb P(E) \rightarrow M$ be the associated projective bundle over $M$ with fiber $\mathbb C \mathbb P^{n-1}$, where $\mathbb P(E)$ is the space of lines in $E$. Let $\mathcal L$ be the tautological complex line bundle over $\mathbb P(E)$.  The restriction of $\mathcal L$ to each fiber is the tautological line bundle $\mathcal O(-1)$ over $\mathbb C \mathbb P^{n-1}$. Let $\omega$ be the first Chern class of the dual bundle $\mathcal L^*$. Note that the Fubini-Study metric $\omega_{FS}$ on $\mathbb C \mathbb P^{n-1}$ is first Chern class of the line bundle $\mathcal O(1)$. Therefore,  $\omega$ a closed two form on $\mathbb P(E)$, whose restriction  to each fiber $\mathbb C \mathbb P^{n-1}$ is the symplectic form $\omega_{FS}$. If $\Omega$ be a local  generator of   	$\wedge^{(n,0)}(T^{*}M \otimes \mathbb{C})$, then define \begin{equation} \rho := e^{  i \omega } \wedge \pi^{*} \Omega.
   	\end{equation}
   	As $d\Omega =0 $ and $d\omega =0$, we have $d\rho =0$. Hence, $\mathbb P(E)$ admits a generalized complex structure.
    \end{example}

 \section{Tubular neighborhood of the fiber of a torus bundle}
 
 Let $\theta$ be a \textit{Maurer-Cartan} connection 1-form on $S^1$.
Consider a decomposition 
\begin{equation}\label{product}
  \T^{2l} = \prod_{j=1}^{2l} S^1  
\end{equation}
 of Lie groups.
Let $P_{i}:\T^{2l}= \prod_{j=1}^{2l} S^1\longrightarrow S^1$ be the projection map on $i$-th coordinate for $i=1,\ldots,2l$. Then 
$(P^{*}_1\theta, \ldots, P^{*}_{2l}\theta)$ is the \textit{Maurer-Cartan} connection on $\T^{2l}= \prod_{j=1}^{2l} S^1$. Note that the $2$-form 
\begin{equation}\label{tdomega}
{\omega}_{\T} =\sum^{l}_{j=1}(P^{*}_{2j-1}\theta\wedge P^{*}_{2j}\theta)
\end{equation} 
gives a symplectic form on $\T^{2l}$.\\ 

Let $M\times\T^{2l}\xrightarrow{\pr_2}\T^{2l}$ is the natural projection map. For each $i\in\{1,2,...,2l\}$, define
\begin{equation}\label{tdtheta i}
\tilde{\theta}_{i}=\pr^{*}_2 P^{*}_{i}\theta\,.
\end{equation}
\medskip
Then, $ (\tilde{\theta}_{1},\ldots,\tilde{\theta}_{2l})$ is a {$\T^{2l}$-invariant} connection of the trivial principal $\T^{2l}$-bundle $M\times\T^{2l}\xrightarrow{\pr_1} M$. \\

Now, let $\pi: E \to M$ be a smooth principal $\T^{2l}$-bundle that satisfies the hypothesis of Proposition \ref{gencx}. Let $\{U_{\alpha}\}$ be a locally finite open cover of $M$ such that $E$ admits a local trivialization  
\begin{equation}\label{phialpha}
    \phi_{\alpha}:\pi^{-1}(U_{\alpha})\longrightarrow U_{\alpha}\times \T^{2l}
\end{equation}
over each $U_{\alpha}$. Let 
$i_{\alpha}: U_{\alpha}\times \T^{2l}\hookrightarrow M\times\T^{2l}$ be the natural inclusion map.
Then, $\phi_{\alpha}^{*} i_{\alpha}^{*} \tilde{\theta}_{i}, \,  1 \leq i \leq 2l,$ are 1-forms on $\pi^{-1}(U_{\alpha})$. 
Let $\{ \psi_{\alpha} \}$ be a smooth partition of unity on $M$ subordinate to $\{U_{\alpha}\}$. For each $j\in\{1,\ldots,2l\}$, define an {$S^1$-invariant} 1-form $\theta_j$ on $E$ by
\begin{equation}\label{theta i}
\theta_{j}=\sum_{\alpha}(\psi_{\alpha}\circ\pi)\,\,\phi_{\alpha}^{*}i_{\alpha}^{*}\tilde{\theta}_{j}\,.
\end{equation}
Then,  $\Theta := (\theta_1, \ldots, \theta_{2l}) $ gives a connection on the principal bundle $E$ corresponding to the decomposition \eqref{product} which is invariant under the $\T^{2l}$ action. Define 
\begin{equation}\label{omega}
\omega\, :=\,\sum^1_{j=1}\theta_{2j-1}\wedge\theta_{2j}\,.
\end{equation}

If the curvature of $\Theta$ is of type $(1,1)$, then by Proposition \ref{gencx}, we have  a family of GCS on $E$ with the canonical line bundle $U_{E}$ locally generated by 
\begin{equation}\label{rho}
\rho\,=\,e^{\eta+i\omega}\wedge\pi^{*}(\Omega)
\end{equation}
where $\eta$ is a closed real $2$-form on $E$.
In general, $\rho$ only gives an {\it almost GCS} on $E$, i.e., the integrability condition \eqref{d rho} may not be satisfied.

Let \begin{equation}\label{rho_0} \rho_0\,:=\,e^{i\omega}\wedge\pi^{*}(\Omega)\,. \end{equation}  
For each $b \in M$, let
\begin{equation}\label{i_b}
    i_{b}: \pi^{-1}(b) = E_{b}\longrightarrow E
\end{equation}
be the natural inclusion map.
Consider the $\T^{2l}$-invariant symplectic structure induced by $\omega$ on $E_{b}$, $$\omega_{b}:= i_{b}^{*}\omega\,.$$
Given $b \in U_{\alpha}$, 
consider the map (cf. \eqref{phialpha})  
$$ \phi^{-1}_{\alpha, b} := \phi^{-1}_{\alpha} (b,\cdot) : \T^{2l} \longrightarrow E_b.$$ 
Similarly, using the identification of $\{b\}\times \T^{2l}$ with $\T^{2l}$, denote  by 
\begin{equation}\label{phi alpha b}
    \phi_{\alpha, b}: E_b \longrightarrow \T^{2l} 
\end{equation}
the restriction of the map $\phi_{\alpha}$ to $E_b$.
\noindent 
Consider the family of symplectic forms $\tilde{\omega}_{b}$ on $\T^{2l}$ defined by
\begin{equation}\label{omegatildeb}
  \tilde{\omega}_{b}=(\phi^{-1}_{\alpha, b})^{*}\omega_{b} \,.   
\end{equation}

\begin{lemma}\label{same localization} 
For any $b\in U_{\alpha}\cap U_{\beta}$, $$(\phi^{-1}_{\alpha, b})^{*}\omega_{b}=(\phi^{-1}_{\beta, b})^{*}\omega_{b} \,.$$ \end{lemma}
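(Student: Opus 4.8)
The claim is that the symplectic form $\omega_b = i_b^*\omega$ on the fiber $E_b$, when pulled back to $\T^{2l}$ via the two trivializations $\phi_\alpha^{-1}$ and $\phi_\beta^{-1}$, gives the same form on $\T^{2l}$. The plan is to reduce everything to a statement about the transition function of the bundle and the left-invariance of the relevant forms on $\T^{2l}$. First I would observe that over $U_\alpha \cap U_\beta$ the composite $\phi_\beta \circ \phi_\alpha^{-1} : (U_\alpha\cap U_\beta)\times\T^{2l} \to (U_\alpha\cap U_\beta)\times\T^{2l}$ has the form $(b, t) \mapsto (b, g_{\alpha\beta}(b)\cdot t)$ for the transition function $g_{\alpha\beta}: U_\alpha\cap U_\beta \to \T^{2l}$; restricting to the fiber over a fixed $b$, the map $\phi_{\beta,b}\circ \phi_{\alpha,b}^{-1} : \T^{2l}\to\T^{2l}$ is left translation $L_{g_{\alpha\beta}(b)}$ by the fixed group element $g_{\alpha\beta}(b)$. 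Hence it suffices to show that $\tilde\omega_b^{(\alpha)} := (\phi_{\alpha,b}^{-1})^*\omega_b$ is invariant under left translations on $\T^{2l}$, since then $(\phi_{\beta,b}^{-1})^*\omega_b = (L_{g_{\alpha\beta}(b)}^{-1})^*(\phi_{\alpha,b}^{-1})^*\omega_b = (\phi_{\alpha,b}^{-1})^*\omega_b$.

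Next I would establish the left-invariance of $\tilde\omega_b^{(\alpha)}$. The key point is that $\omega = \sum_{j=1}^l \theta_{2j-1}\wedge\theta_{2j}$ is built from the connection forms $\theta_j$, and each $\theta_j$ was defined in \eqref{theta i} as a convex combination (via the partition of unity $\psi_\gamma\circ\pi$) of the pullbacks $\phi_\gamma^* i_\gamma^*\tilde\theta_j$, where $\tilde\theta_j = \pr_2^* P_j^*\theta$ is pulled back from the Maurer–Cartan form on $\T^{2l}$. When we restrict $\omega$ to the single fiber $E_b$, the coefficients $\psi_\gamma(\pi(\cdot))$ become the constants $\psi_\gamma(b)$, so $i_b^*\theta_j = \sum_\gamma \psi_\gamma(b)\, i_b^*\phi_\gamma^* i_\gamma^*\tilde\theta_j$. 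Now $i_\gamma^*\tilde\theta_j$ is just (the restriction to $U_\gamma\times\T^{2l}$ of) $\pr_2^*P_j^*\theta$, and on the fiber $\{b\}\times\T^{2l}$ this is precisely $P_j^*\theta$, the $j$-th component of the Maurer–Cartan form on $\T^{2l}$. Therefore $(\phi_{\gamma,b}^{-1})^* i_b^*\phi_\gamma^* i_\gamma^*\tilde\theta_j = P_j^*\theta$, which is a bi-invariant $1$-form on the abelian group $\T^{2l}$ (for a torus, or more generally any Lie group, the Maurer–Cartan form is left-invariant). Consequently $(\phi_{\alpha,b}^{-1})^*\omega_b$ is, fiberwise, a fixed $\sum_\gamma \psi_\gamma(b)$-weighted expression — but actually the dependence on $\gamma$ cancels more subtly because $(\phi_{\alpha,b}^{-1})^* i_b^*\phi_\gamma^* (\cdots)$ involves the ratio $\phi_{\gamma,b}\circ\phi_{\alpha,b}^{-1} = L_{g_{\alpha\gamma}(b)}$, and $L_{g}^* P_j^*\theta = P_j^*\theta$ by left-invariance. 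Hence each summand equals $P_j^*\theta$ regardless of $\gamma$, and since $\sum_\gamma\psi_\gamma(b)=1$ we get $(\phi_{\alpha,b}^{-1})^*\omega_b = \omega_\T = \sum_{j=1}^l P_{2j-1}^*\theta\wedge P_{2j}^*\theta$, which is manifestly left-invariant and independent of $\alpha$. This simultaneously proves the lemma.

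I expect the main obstacle to be bookkeeping rather than conceptual: one must be careful that the index $j$ in the partition of unity ranges over a possibly different cover than $\{U_\alpha, U_\beta\}$, so the cancellation uses left-invariance of the Maurer–Cartan form under \emph{every} relevant transition $g_{\alpha\gamma}(b)$, not just $g_{\alpha\beta}(b)$; and one must track carefully that restriction to $E_b$ genuinely turns the partition-of-unity functions into constants and the $\pr_2$-pullback into the honest Maurer–Cartan form on $\T^{2l}$. A cleaner alternative avoiding any explicit computation with $\psi_\gamma$: simply note that $\phi_{\beta,b}\circ\phi_{\alpha,b}^{-1}$ is left translation by a fixed element of the abelian group $\T^{2l}$, and that $\omega_b$ pulled back to $\T^{2l}$ is $\T^{2l}$-invariant because $\omega$ is $\T^{2l}$-invariant on $E$ (as stated right after \eqref{omega}) and the trivializations are $\T^{2l}$-equivariant; an invariant form on a group is left-invariant, hence pulled back to itself by any left translation — giving the equality directly. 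I would present this shorter argument as the main proof and relegate the explicit formula $\tilde\omega_b = \omega_\T$ to a remark if needed elsewhere.
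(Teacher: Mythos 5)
Your proposal is correct, and the ``cleaner alternative'' you settle on as the main proof is exactly the paper's argument: the fiberwise transition $\phi^{-1}_{\beta,b}\circ\phi_{\alpha,b}$ is the action of a single element of $\T^{2l}$, and the $\T^{2l}$-invariance of $\omega_{b}$ then gives the equality of the two pullbacks immediately. The longer computation showing $(\phi^{-1}_{\alpha,b})^{*}\omega_{b}=\omega_{\T}$ is also correct but is not needed for the lemma; the paper carries it out separately afterwards (culminating in \eqref{omegabsame}).
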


\begin{proof}
Consider the composition of the maps $E_{b}\xrightarrow{\phi_{\alpha,b}}\T^{2l}\xrightarrow{\phi^{-1}_{\beta, b}} E_{b}$. Note that $\phi^{-1}_{\beta,b}\circ\phi_{\alpha,b}\in\T^{2l}$. Then, we have, 
\begin{align*}
(&\phi^{-1}_{\beta,b}\circ\phi_{\alpha,b})^{*}\omega_{b} = \omega_{b} \quad \quad (\text{as}\,\,\omega_{b}\,\,\text{is}\,\,\T^{2l}\text{-invariant})\\
 \implies &\phi_{\alpha,b}^{*}\circ (\phi^{-1}_{\beta,b})^{*}\omega_{b} = \omega_{b}\,,\\
\implies & (\phi^{-1}_{\beta, b})^{*}\omega_{b}=(\phi^{-1}_{\alpha, b})^{*}\omega_{b}.
\end{align*}
\end{proof}
Thus,  the symplectic form $\tilde{\omega}_{b}$ that defined  on $\T^{2l}$
by \eqref{omegatildeb} does not depend on the choice of local trivialization.
\begin{remark}
Lemma \ref{same localization} does not depend on the choice of connection i.e, for any connection $(\theta_1, \ldots, \theta_{2l})$ on the principal bundle $E$
	 corresponding to a decomposition $\T^{2l} = \prod_{j=1}^{2l} S^1$ of Lie groups and the corresponding $\omega$ as in Proposition \ref{gencx}, we can use the same techniques and get the same result.
\end{remark}

Now by a similar argument as in Lemma \ref{same localization} and as $\tilde{\theta_i}$ is $\T^{2l}$-invariant, 
$$\phi_{\alpha,b}^{*} (\tilde{\theta}_i\mid_W) = \phi_{\beta,b}^{*} (\tilde{\theta}_i\mid_W)\,,$$ for any open set $W \subset U_{\alpha}\cap U_{\beta}$ and 
any $b \in W$.
Therefore, $$ i_{b}^{*} \circ \phi_{\alpha}^{*}(\tilde{\theta}_{i}\mid_W)= 
i_{b}^{*} \circ \phi_{\beta}^{*}(\tilde{\theta}_{i}\mid_W)\,, $$ for any $b \in W \subset U_{\alpha}\cap U_{\beta}$.

Then, it follows from the local finiteness of $\{U_{\alpha}\}$ and \eqref{theta i} that for any $b\in M$ and $i\in\{1,\ldots,2l\}$, there exists a suitable open set $W$ containing $b$ such that
\begin{align*}
i^{*}_{b}\theta_{i}&
=\sum_{\beta}\psi_{\beta}(b) \, i_{b}^{*} \circ \phi_{\beta}^{*}(\tilde{\theta}_{i}\mid_W) \\
&= \sum_{\beta}\psi_{\beta}(b) \, i_{b}^{*} \circ \phi_{\alpha}^{*}(\tilde{\theta}_{i}\mid_W) \\
&=  i_{b}^{*} \circ \phi_{\alpha}^{*}(\tilde{\theta}_{i}\mid_W) 
\end{align*}
for any  $\alpha$ satisfying $b \in U_{\alpha}$. Hence, we have, 
\begin{equation}\label{i star theta}
\begin{array}{ll}
   i^{*}_{b}\theta_{i} & =  i_{b}^{*} \circ \phi_{\alpha}^{*}(\tilde{\theta}_{i}\mid_W)\\
  &  =   i_{b}^{*} \circ \phi_{\alpha}^{*}(\tilde{\theta}_{i}) \\
  & = (\phi_{\alpha}\circ i_{b})^{*}\tilde{\theta}_i
  \end{array}
\end{equation}
for any  $\alpha$ such that $b \in U_{\alpha}$.

Let us explicitly calculate $\tilde{\omega}_{b}$ for every $b\in M$. As in \eqref{omega}, $\omega\,=\,\sum^1_{j=1}\theta_{2j-1}\wedge\theta_{2j}$. We have
\begin{align*}
i^{*}_{b}\omega&=\sum_{j=1}^{l} i^{*}_{b}\theta_{2j-1}\wedge i^{*}_{b}\theta_{2j}\\
&=\sum^l_{j=1}(\phi_{\alpha}\circ i_{b})^{*}\tilde{\theta}_{2j-1}\wedge(\phi_{\alpha}\circ i_{b})^{*}\tilde{\theta}_{2j} \quad (\text{by}\,\,\eqref{i star theta})\\
&=\sum^l_{j=1}(\phi_{\alpha}\circ i_{b})^{*}(\tilde{\theta}_{2j-1}\wedge\tilde{\theta}_{2j}) \\  
&=\sum^l_{j=1}(\pr_2\circ\,\phi_{\alpha}\circ i_{b})^{*}(P^{*}_{2j-1}\theta\wedge P^{*}_{2j}\theta)\\
&=\sum^l_{j=1}(\pr_2\circ\,\tilde{i}_{b}\circ\phi_{\alpha, b})^{*}(P^{*}_{2j-1}\theta\wedge P^{*}_{2j}\theta)\\
&=\sum^l_{j=1}\phi_{\alpha, b}^{*}(P^{*}_{2j-1}\theta\wedge P^{*}_{2j}\theta)\quad  (\text{as}\,\,\pr_2\circ\,\tilde{i}_{b}=\text{Id}_{\T^{2l}},\, \text{see}\,\,\eqref{tilde i})\\
&=\phi_{\alpha,b}^{*} \,{\omega}_{\T}\quad ({\rm see} \, \eqref{tdomega})
\end{align*}
for any  $\alpha$ such that $b \in U_{\alpha}$.
Therefore, for each $b\in M$, 
\begin{equation}\label{omegabsame}
\tilde{\omega}_{b}
=(\phi^{-1}_{\alpha, b})^{*}(i^{*}_{b}\omega)
=(\phi^{-1}_{\alpha, b})^{*}(\phi_{\alpha, b}^{*} {\omega}_{\T})
={\omega}_{\T} \,.
\end{equation}
In other words, $\tilde{\omega}_{b} $ is independent of the choice of $b \in M$.
\begin{lemma}\label{lem:2conns}
Let $(\theta_1, \ldots, \theta_{2l})$ and $(\theta^{'}_1, \ldots, \theta^{'}_{2l})$ be any two connections on $E$ corresponding to a decomposition of $\T^{2l} = \prod_{i=1}^{2l} S^1$ of Lie groups. Then, for each $j\in\{1,\cdots,2l\}$, there exists a $1$-form $\beta_{j}\in\Omega^1(M)$ such that 
 $$\theta_{j}-\theta^{'}_{j}=\pi^{*}\beta_{j} \,. $$   
\end{lemma}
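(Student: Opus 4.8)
The plan is to exploit the defining property of a connection on a principal $\T^{2l}$-bundle associated with the decomposition $\T^{2l} = \prod_{j=1}^{2l} S^1$: each $\theta_j$ is an $S^1$-valued connection $1$-form for the corresponding circle factor, hence is invariant under the $\T^{2l}$-action and restricts to the Maurer--Cartan form on each fiber. First I would fix such a decomposition and recall that, writing $X_j$ for the fundamental vector field generated by the $j$-th circle factor (with the normalization coming from the Maurer--Cartan form $\theta$ on $S^1$), a connection $\Theta = (\theta_1,\dots,\theta_{2l})$ is characterized by $\theta_i(X_j) = \delta_{ij}$ and $\mathcal{L}_{X_j}\theta_i = 0$ for all $i,j$. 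Both $\Theta$ and $\Theta'$ satisfy these, so setting $\beta_j' := \theta_j - \theta_j'$ we immediately get $\iota_{X_i}\beta_j' = 0$ for all $i$ and $\mathcal{L}_{X_i}\beta_j' = 0$ for all $i$; that is, each $\beta_j'$ is a horizontal, $\T^{2l}$-invariant (basic) $1$-form on $E$.

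The next step is the standard descent argument: a basic form on a principal bundle is the pullback of a unique form on the base. Concretely, for $b \in M$ and a trivializing neighborhood $U_\alpha \ni b$ as in \eqref{phialpha}, horizontality of $\beta_j'$ means that at each point of $\pi^{-1}(U_\alpha)$ the value of $\beta_j'$ annihilates the vertical tangent space, and invariance means this value is constant along the fiber direction; hence $\beta_j'$ on $\pi^{-1}(U_\alpha)$ is the pullback under $\pi$ of a well-defined $1$-form $\beta_j^{(\alpha)}$ on $U_\alpha$, explicitly obtained by restricting $\beta_j'$ to any local section of $\phi_\alpha$. On overlaps $U_\alpha \cap U_\beta$, the two locally defined forms $\beta_j^{(\alpha)}$ and $\beta_j^{(\beta)}$ have the same pullback $\beta_j'$, and since $\pi$ is a surjective submersion $\pi^*$ is injective on forms, so they agree; thus they patch to a global $1$-form $\beta_j \in \Omega^1(M)$ with $\pi^*\beta_j = \beta_j' = \theta_j - \theta_j'$, as desired.

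The only mild subtlety — and the point I would be most careful about — is making sure that "connection corresponding to the decomposition $\T^{2l} = \prod S^1$" is being used in the strong sense that each $\theta_j$ is separately the connection form of the $j$-th circle factor (equivalently, $\Theta$ is a connection for the $\T^{2l}$-bundle whose $\mathfrak{t}^{2l} = \mathbb{R}^{2l}$-valued form has components $\theta_j$ in the standard basis dual to the chosen product decomposition). Under that reading the argument above is essentially immediate; the verification that $\beta_j'$ is basic is the routine calculation and the descent is textbook, so there is no real obstacle. If instead one only knew $\Theta$ and $\Theta'$ were $\mathfrak{t}^{2l}$-valued connections for the same $\T^{2l}$-action, the same conclusion still holds componentwise because the $\mathbb{R}$-linear structure on $\mathfrak{t}^{2l}$ lets one treat each component independently, so the lemma is robust to this distinction.
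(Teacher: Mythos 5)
Your proposal is correct and follows essentially the same route as the paper: the paper's proof is exactly the pointwise unpacking of your "basic forms descend" argument (independence of the choice of lift $w$ is your horizontality, independence of the point $y$ in the fiber is your $\T^{2l}$-invariance, and smoothness is checked via a local section, which is your explicit descent). No gap.
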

\begin{proof}
  Denote the connections by $\Theta :=(\theta_1, \ldots, \theta_{2l})$ and $\Theta^{'} := (\theta^{'}_1, \ldots, \theta^{'}_{2l})$. For each $x\in M$,  define the value of $\beta_j$ at $x$, by
  \begin{equation}\label{i beta}
  (\beta_{j})_x(v) := (\theta_{j}-\theta^{'}_{j})_y (w)
  \end{equation}
 for any $v\in T_{x}M$, where $\pi(y)=x$ and $d\pi_{y}(w)=v$.
 First, we show that the definition is independent of the choice of 
 $w$ for fixed $v$ and $y$. 
 Let $w, w^{'}\in (d\pi_{y})^{-1}(v)$. Then, $d\pi_{y}(w-w^{'})=0$. So
 $w-w^{'} \in T_y(E_x)$. There exists a vector $W$ in the Lie algebra $\mathfrak{t}$ of $\T^{2l}$ such that the fundamental vector field $W^{\#}$
 of $W$ satisfies 
 $$ W^{\#}(y) = w- w^{'} \,.$$
 It follows that 
 $$ \Theta_y (w- w^{'}) = W = \Theta^{'}_y (w- w^{'} ) \,. $$
 Thus, 
\begin{equation*}\label{w invariance}
(\theta_{j}-\theta^{'}_{j})_{y}(w)=(\theta_{j}-\theta^{'}_{j})_{y}(w^{'}) \,,
\end{equation*}
showing that the definition of $ (\beta_{j})_x(v) $ in \eqref{i beta} is independent of the choice of $w$.

Moreover, as the structure group is abelian, the connections $\Theta$ and $\Theta^{'}$ on $E$ are $\T^{2l}$-invariant. Given
any  $y, y^{'}\in\pi^{-1}(x)$, there exists $g\in\T^{2l}$ such that $y= r_g(y') = y^{'} \cdot g$. 
Let $w^{'}\in T_{y^{'}M}$ such that $(dr_g)_{y^{'}}(w^{'})=w$ and $d\pi_{y}(w)=v$. Then, 
\begin{align*}\label{y invariance} 
(\theta_{j}-\theta^{'}_{j})_{y}(w) 
 &=(\theta_{j}-\theta^{'}_{j})_{gy^{'}}
((dr_g)_{y^{'}}(w^{'})) \\
&=(r_g^{*}(\theta_{j}-\theta^{'}_{j}))_{y^{'}}(w^{'})\\
&=(\theta_{j}-\theta^{'}_{j})_{y^{'}}(w^{'}) \, .
\end{align*}
This proves that the definition of $(\beta_{j})_{x}$ as in \eqref{i beta} is independent of choices of both $y$ and $w$.

Next, we take advantage of the above independence of choices to show that $\beta_j$ is a smooth form.
Let $$f: =\phi_{\alpha}^{-1}\circ\,i \,$$
where $i: U_\alpha \longrightarrow U_{\alpha} \times \T^{2l}$ is the inclusion map
defined by $i (z) := (z, 1)$.
Here, $1$ denotes the identity element of $\T^{2l}$.
Then,  
$d\pi_{f(z)}(df_{z}(v))=v$ for all $z\in U$ and $v\in T_{z}U$. It follows that
$$\beta_{j}=(\theta_{j}-\theta^{'}_{j})\circ\,df \,,$$
completing the proof of the lemma. 
\end{proof}
Let $(\theta_1, \ldots, \theta_{2l})$ be the connection defined in \eqref{theta i}, and let $\Omega$ denote a local $(n,0)$ form on $M$ as in Proposition \ref{gencx}.
Let $(\theta^{'}_1, \ldots, \theta^{'}_{2l})$ be any connection on $E$, corresponding to the same decomposition $\T^{2l} = \prod_{i=1}^{2l} S^1$ of Lie groups, such that $d \rho_0^{\prime} =0$ where
\begin{equation}\label{rho'_0} \rho^{'}_0\,:=\,e^{i\omega^{'}}\wedge\pi^{*}(\Omega)\, 
\quad {\rm and}  \quad \omega^{'} := \sum_{i=1}^{l} \theta^{'}_{2i-1} \wedge \theta^{'}_{2i} \, . \end{equation}

Then, following the Proposition \ref{gencx}, we get a family of GCS on $E$ with the canonical line bundle $U^{'}_{E}$ , locally generated by 
\begin{equation}\label{rho'}
\rho^{'}\,=\,e^{\eta+i\omega^{'}}\wedge\pi^{*}(\Omega)
\end{equation}
where $\eta$ is a closed real $2$-form on $E$.

Fix $\alpha$. Let $\pr_1: U_{\alpha}\times\T^{2l}\longrightarrow U_{\alpha}$ and 
$\pr_2: U_{\alpha}\times\T^{2l}\longrightarrow \T^{2l} $
be the natural projections. Note that $\Pr_1 \circ\, \phi_{\alpha} = \pi$ on 
$E \mid_{U_{\alpha}}= \pi^{-1} (U_{\alpha})$.
\noindent 
On $\pi^{-1}(U_{\alpha})$, we have the GCS given by $\rho^{'}_0|_{\pi^{-1}(U_{\alpha})}= e^{i\omega^{'}}\wedge\pi^{*}\Omega$. Hence, we get a GCS on $U_{\alpha}\times\T^{2l}$ given by 
\begin{equation}\label{gconualpha}
    \widetilde{\rho}_{\alpha} := (\phi^{-1}_{\alpha})^{*}(\rho^{'}_0|_{\pi^{-1}(U_{\alpha})})=e^{i(\phi^{-1}_{\alpha})^{*}\omega^{'}}\wedge(\phi^{-1}_{\alpha})^{*}\pi^{*}\Omega=e^{i(\phi^{-1}_{\alpha})^{*}\omega^{'}}\wedge\pr_1^{*}\Omega \,.
\end{equation}
Consider the following decomposition.
\begin{equation}\label{decomp}
\begin{aligned}
\Omega^{k}_{\C}(U_{\alpha}\times \T^{2l})=\sum_{r+p+q=k}\pr_2^{*}(\Omega^{r}_{\C}(\T^{2l}))\otimes_{C^{\infty}(U_{\alpha}\times \T^{2l},\C)}&\bigg(\pr_1^{*}(\Omega^{p,0}(U_{\alpha}))\\
\otimes_{C^{\infty}(U_{\alpha},\C)}\pr_1^{*}(\Omega^{0,q}(U_{\alpha}))\bigg) &
\end{aligned}
\end{equation}
Accordingly, $i(\phi^{-1}_{\alpha})^{*}\omega^{'}\in\Gamma(\wedge^2 T^{*}_{\C}(U_{\alpha}\times \T^{2l}))$  
 decomposes into six components,
\begin{equation*}
\begin{matrix}
A^{200}\\
A^{110} & A^{101}\\
A^{020} & A^{011} & A^{002}\,.
\end{matrix}
\end{equation*}
Here, the first superscript in $A^{rpq}$ corresponds to the de Rham grading on $\Omega^{\bullet}_{\C}(\T^{2l})$, 
and the last two superscripts correspond to the Dolbeault grading on 
$\Omega^{\bullet}_{\C}(U_{\alpha})$. Furthermore, the exterior derivative decomposes into the sum of three operators
$$d=d_{F}+\partial+\overline{\partial}\,,$$ each of degree $1$ in their respective component of the tri-grading. Note that $d_{F}$ is the fiber-wise exterior derivative. 
 

Denote the imaginary part of $A^{200}$ by $\widehat{\omega}$. In other words,
\begin{equation}\label{tilde omega}
  A^{200} = i\widehat{ \omega} \,. 
\end{equation}
\noindent
Consider the  maps,
\[\begin{tikzcd}[ampersand replacement=\&]
	{\T^{2l}} \&\& {U_{\alpha}\times\T^{2l}} \&\& {\pi^{-1}(U_{\alpha})} 
	\arrow["{\tilde{i}_{b}}", from=1-1, to=1-3]
	\arrow["{\phi^{-1}_{\alpha}}", from=1-3, to=1-5]
\end{tikzcd}\]
where 
\begin{equation}\label{tilde i}
    \tilde{i}_{b}(x) := (b,x) \,.
\end{equation}
Recall the connection form $\theta_{j}$ in \eqref{theta i}. Then, applying Lemma \ref{lem:2conns} and equation \eqref{omegabsame}, 
for each $b\in M$, we get   
	  \begin{align*}
	  &i^{*}_{b}\theta^{'}_{j}= i^{*}_{b}\theta_{j} \quad \forall \,j\\
	  \implies &i^{*}_{b}\omega^{'} = i^{*}_{b}\omega \quad (\omega\,\, \text{as in}\,\,\eqref{omega})\\
	  \implies & (\phi^{-1}_{\alpha, b})^{*}i^{*}_{b}\omega^{'}
	  = (\phi^{-1}_{\alpha, b})^{*}i^{*}_{b}\omega \\
        \implies & (\phi^{-1}_{\alpha, b})^{*}i^{*}_{b}\omega^{'} = \tilde{\omega}_{b} \\
	  \implies & (\phi^{-1}_{\alpha, b})^{*}i^{*}_{b}\omega^{'}= \omega_{\T}\,.
	  \end{align*}
Then, we have,
\begin{align*}
\tilde{i}_{b}^{*}A^{200}&=i(\tilde{i}_{b}^{*}(\phi^{-1}_{\alpha})^{*}\omega^{'}))\\
&=i(\phi^{-1}_{\alpha}\circ\tilde{i}_{b})^{*}\omega^{'}\\
&=i(i_{b}\circ\phi^{-1}_{\alpha,b})^{*}\omega^{'}\\
&=i(\phi^{-1}_{\alpha, b})^{*}i^{*}_{b}\omega^{'}  \\
&=i\omega_{\T}\,.
\end{align*}
\noindent
Hence, by \eqref{tilde omega} we get, 
\begin{equation}\label{hatomegabsame}
   \tilde{i}_{b}^{*}\widehat{\omega} = {\omega}_{\T} \, \,\forall\,b\in U_{\alpha} \,. 
\end{equation}
\noindent 
Consider the GCS $\widetilde{\rho}_{\alpha}$ on $U_{\alpha} \times \T^{2l}$ from \eqref{gconualpha},
$$ e^{i(\phi_{\alpha}^{-1})^{*}\omega^{'} } \wedge \pr_1^{*} \Omega  = 
    e^{\sum A^{rpq}} \wedge \pr_1^{*} \Omega  \,.$$ 
\noindent
Note that only the components $A^{200}\,,\,A^{101}\,\text{and}\,A^{002}$ act non-trivially, via the wedge product,  on $\pr_1^{*}\Omega$ in the expression $e^{i(\phi^{-1}_{\alpha})^{*}\omega^{'}}\wedge\pr_1^{*}\Omega$, as $\Omega$ is a pure  $(n,0)$-type form. Therefore, $\widetilde{\rho}_{\alpha}$ simplifies to
\begin{equation}\label{rhoalpha}  \widetilde{\rho}_{\alpha} = e^{i \widehat{\omega} + A^{101} + A^{002}} \wedge \pr_1^{*} \Omega  \,. \end{equation}
Then by equation \eqref{drz}, we get that $d(i(\phi^{-1}_{\alpha})^{*}\omega^{'})\wedge\pr_1^{*}\Omega=0$ which implies the following four equations
\begin{equation}\label{Eq1}
\overline{\partial} A^{002}=0     
\end{equation}
\begin{equation}\label{Eq2}
 \overline{\partial} A^{101}+d_{F} A^{002}=0  
\end{equation}
\begin{equation}\label{Eq3}
\overline{\partial} A^{200}+d_{F} A^{101}=0    
\end{equation}
\begin{equation}\label{Eq4}
 d_{F} A^{200}=0\,.   
\end{equation} The last equation just states that the pullback of $i(\phi^{-1}_{\alpha})^{*}\omega^{'}$ to any fiber is a closed form, as we already know by \eqref{hatomegabsame}. 
 Moreover, since $\overline{A^{101}}$ and $\overline{A^{002}}$  are of the type $({110})$ and $(020)$, respectively, their wedge products with $\pr_1^{*} \Omega$ vanish. Therefore, in general, the exponent of $e$ in \eqref{rhoalpha}  may be modified to 
$$i \widehat{\omega} + A^{101} + \overline{A^{101}}  + A^{002} +\overline{A^{002}}+\widehat{A} \,,$$ where $\widehat{A}$ is a real $2$-form of type $(011)$.
Therefore, we obtain,
\begin{equation}\label{rhoalpha2}  \widetilde{\rho}_{\alpha} = 
e^{\widehat{B}+i \widehat{\omega}} \wedge \pr_1^{*} \Omega  \,, \end{equation}
where $\widehat{B}=A^{101} + \overline{A^{101}}  + A^{002} +\overline{A^{002}}+\widehat{A} \,.$

\begin{lemma}\label{omegahat} The form $\widehat{\omega} $ is the pullback of the form $\omega_{\T}$ on $\T^{2l}$ under the projection map $\pr_2: U_{\alpha} \times \T^{2l} \longrightarrow \T^{2l} $, i.e. $$ \widehat{\omega} = \pr_2^{*} \omega_{\T} \,. $$
\end{lemma}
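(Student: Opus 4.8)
The plan is to combine the structure of the tri-grading in \eqref{decomp} with the fiberwise computation \eqref{hatomegabsame}, with no further input needed. First I would record that both $\widehat{\omega}$ and $\pr_2^{*}\omega_{\T}$ are forms of type $(2,0,0)$ on $U_{\alpha}\times\T^{2l}$. This is immediate for $\pr_2^{*}\omega_{\T}$, since it lies in $\pr_2^{*}(\Omega^{2}_{\C}(\T^{2l}))$. For $\widehat{\omega}$ it follows from \eqref{tilde omega}: we have $\widehat{\omega}=-iA^{200}$, and $A^{200}$ is by definition the $(2,0,0)$-component of $i(\phi^{-1}_{\alpha})^{*}\omega^{'}$. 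Since $(\phi^{-1}_{\alpha})^{*}\omega^{'}$ is real and complex conjugation preserves the $(2,0,0)$-component (the torus fiber carries no holomorphic structure, so conjugation only interchanges the two Dolbeault indices on $U_{\alpha}$), the form $A^{200}$ is purely imaginary and $\widehat{\omega}$ is genuinely real.

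Next I would use the fact that a type-$(2,0,0)$ form is determined by its restrictions to the fibers. Writing $\vartheta_{i}:=\pr_2^{*}P^{*}_{i}\theta$ and using the coframe $(P^{*}_{1}\theta,\ldots,P^{*}_{2l}\theta)$ on $\T^{2l}$, every type-$(2,0,0)$ form can be written uniquely as $\sum_{j<k}f_{jk}\,\vartheta_{j}\wedge\vartheta_{k}$ with $f_{jk}\in C^{\infty}(U_{\alpha}\times\T^{2l},\C)$. Since $\pr_2\circ\tilde{i}_{b}=\mathrm{Id}_{\T^{2l}}$, where $\tilde{i}_{b}$ is the slice inclusion from \eqref{tilde i}, we have $\tilde{i}_{b}^{*}\vartheta_{i}=P^{*}_{i}\theta$, so $\tilde{i}_{b}^{*}$ acts on such a form simply by evaluating the coefficients $f_{jk}$ at $b$. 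Hence two type-$(2,0,0)$ forms with the same restriction $\tilde{i}_{b}^{*}(\,\cdot\,)$ for every $b\in U_{\alpha}$ must coincide.

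Finally I would compare the two families of restrictions. By \eqref{hatomegabsame}, $\tilde{i}_{b}^{*}\widehat{\omega}=\omega_{\T}$ for every $b\in U_{\alpha}$, while $\tilde{i}_{b}^{*}(\pr_2^{*}\omega_{\T})=(\pr_2\circ\tilde{i}_{b})^{*}\omega_{\T}=\omega_{\T}$ for every $b$. Thus $\widehat{\omega}$ and $\pr_2^{*}\omega_{\T}$ are type-$(2,0,0)$ forms with identical fiberwise restrictions, and the previous step gives $\widehat{\omega}=\pr_2^{*}\omega_{\T}$. The argument is short; the only point demanding care, and the nearest thing to an obstacle, is making precise the sense in which the tri-grading forces type-$(2,0,0)$ forms to be \emph{vertical}, which is exactly what upgrades the fiberwise equality \eqref{hatomegabsame} to the claimed global identity.
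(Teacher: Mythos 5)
Your proof is correct and follows essentially the same route as the paper's: both arguments expand the type-$(2,0,0)$ form $\widehat{\omega}$ in a global frame of $2$-forms pulled back from $\T^{2l}$, observe that the slice restrictions $\tilde{i}_{b}^{*}$ recover the coefficient functions, and then invoke \eqref{hatomegabsame} to conclude. The only cosmetic difference is that the paper first deduces that the coefficients are independent of $b$ (so $\widehat{\omega}=\pr_2^{*}\bar{\omega}$ for some $\bar{\omega}$) and then identifies $\bar{\omega}=\omega_{\T}$ by one more restriction, whereas you compare $\widehat{\omega}$ directly with $\pr_2^{*}\omega_{\T}$; the substance is identical.
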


\begin{proof}
 Since $\widehat{\omega}$ is of type ${(200)}$, it is of the form $$\widehat{\omega} =\sum_{1 \le j\leq\,l(2l-1)}a_{j}\pr_2^{*}\omega_{j}$$ where $a_{j}\in C^{\infty}(U_{\alpha}\times\T^{2l})$ and
$\{\omega_{j} : 1\leq j \leq l(2l-1) \}$ is a global frame of the trivial bundle of smooth $2$-forms on $\T^{2l}$. For any $b,b^{'}\in U_{\alpha}$, by \eqref{hatomegabsame},
\begin{align*}
&\tilde{i}_{b}^{*}\widehat{\omega} =\tilde{i}_{b^{'}}^{*}\widehat{\omega}\\
\implies \sum_{j}&a_{j}(b,\cdot)\,\omega_{j} =\sum_{j}a_{j}(b^{'},\cdot)\,\omega_{j}\\
\implies \sum_{j}&(a_{j}(b,\cdot)-a_{j}(b^{'},\cdot))\,\omega_{j}=0\\
\implies &a_{j}(b,\cdot)=a_{j}(b^{'},\cdot\,).
\end{align*}
Hence, there exists smooth functions $b_{j}\in C^{\infty}(\T^{2l})$ such that $$a_{j}= \pr_2^{*}b_{j}\,.$$ 
Then,
$$ \widehat{\omega} =\pr_2^{*} \bar{\omega} \quad {\rm where} \quad \bar{\omega} =\sum_{j}b_{j}\omega_{j} \in \Omega^2(\T^{2l}) \,. $$ 
Finally, using \eqref{omegabsame} and the fact that $\pr_2 \circ \tilde{i}_b = \id$,  we have
$$ \omega_{\T} =   \tilde{i}_{b}^{*}\widehat{\omega} = \tilde{i}_{b}^{*} \pr_2^{*} \bar{\omega} = \bar{\omega} \,.$$
\end{proof}
Let $\rho^{'}_1=e^{i \pr^{*}_2 {\omega}_{\T}} \wedge \pr_1^{*} \Omega  \,.$ Then from \eqref{rhoalpha2} and Lemma \ref{omegahat}, we can see that the generalized complex structure $\widetilde{\rho}_{\alpha}$, from \eqref{gconualpha}, is of the form
 \begin{equation}\label{main eq1}
 \widetilde{\rho}_{\alpha}= e^{\widehat{B}}\rho^{'}_1\,\,\,\,\,\,\,\,\,\,\,\,\text{on}\,\,\,U_{\alpha}\times \T^{2l}\,,   
 \end{equation} 
 where $\widehat{B}=A^{101} + \overline{A^{101}}  + A^{002} +\overline{A^{002}}+\widehat{A}$, as defined in \eqref{rhoalpha2}. Now $d\widetilde{\rho}_{\alpha}=0$ because $d\rho^{'}=0$, where $\rho^{'}$ as in \eqref{rho'}. This implies 
 \begin{align*}
&e^{\widehat{B}}\wedge d\widehat{B}\wedge\rho^{'}_1=0\,\,\,\,\,(\text{as}\,\,\,d\rho^{'}_1=0)\\
\implies &d\widehat{B}\wedge\rho^{'}_1=0\\
\implies &d\widehat{B}\wedge\pr_1^{*} \Omega=0\,.
 \end{align*}
 So, to ensure $d\widehat{B}=0$, it is enough to show that $(d\widehat{B})^{012}$ and $(d\widehat{B})^{111}$ both are zero. This imposes the following two constraint equations,
 \begin{equation}\label{Eq5}
 (d\widehat{B})^{012}=\partial A^{002}+\overline{\partial}\widehat{A}=0   
 \end{equation}
 \begin{equation}\label{Eq6}
 (d\widehat{B})^{111}=\partial A^{101}+\overline{\partial A^{101}}+d_{F}\widehat{A}=0\,.   
 \end{equation}
For each $j\in\{1,2,\ldots,2l\}$, set $$\theta^{''}_{j,\alpha}:=(\phi^{-1}_{\alpha})^{*}\theta^{'}_{j}\,.$$ Then $(\theta^{''}_{1,\alpha},\ldots,\theta^{''}_{2l,\alpha})$ defines a $\T^{2l}$-invariant connection on $U_{\alpha}\times\T^{2l}$. Consider the connection $(\tilde{\theta}_{1},\ldots,\tilde{\theta}_{2l})$ on $U_{\alpha}\times\T^{2l}$ as defined in \eqref{tdtheta i}. By Lemma \ref{lem:2conns}, there exist $\beta_{j,\alpha}\in\Omega^1(U_{\alpha})$ such that 
\begin{equation}\label{cmparison eq}
\theta^{''}_{j,\alpha}-\tilde{\theta}_{j}=\pr^{*}_1\beta_{j,\alpha}\,.    
\end{equation}
Then, we have,
\begin{align*}
(\phi^{-1}_{\alpha})^{*}\omega^{'}&=\sum^{l}_{j=1}\theta^{''}_{2j-1,\alpha}\wedge\theta^{''}_{2j,\alpha}\,,\\
&=\pr^{*}_2\omega_{\T}+\sum^{l}_{j=1}(\tilde{\theta}_{2j-1}\wedge\pr^{*}_1\beta_{2j,\alpha}+\pr^{*}_1\beta_{2j-1,\alpha}\wedge\tilde{\theta}_{2j})\,\\
&+\sum^{l}_{j=1}\pr^{*}_1(\beta_{2j-1,\alpha}\wedge\beta_{2j,\alpha})\,.
\end{align*}
Let $\beta^{pq}_{j,\alpha}$ correspond to the Dolbeault grading of $\beta_{j,\alpha}\,,$ on $\Omega_{\C}^{\bullet}(U_{\alpha})\,.$ One can see that 
\begin{equation}\label{A101-A002}
    \begin{aligned}
&A^{002}=\sum^{l}_{j=1}\pr^{*}_1(\beta^{01}_{2j-1,\alpha}\wedge\beta^{01}_{2j,\alpha})\,,\\
&A^{101}=\sum^{l}_{j=1}(\tilde{\theta}_{2j-1}\wedge\pr^{*}_1\beta^{01}_{2j,\alpha}+\pr^{*}_1\beta^{01}_{2j-1,\alpha}\wedge\tilde{\theta}_{2j})\,.
    \end{aligned}
\end{equation}
Set $A^{02}:=\sum^{l}_{j=1}(\beta^{01}_{2j-1,\alpha}\wedge\beta^{01}_{2j,\alpha})\,.$ Then,  
\begin{equation}\label{A002}
  A^{002}=\pr_1^{*}A^{02}\,.  
\end{equation}
 From \eqref{Eq1}, we get $\overline{\partial}A^{02}=0$. By using local $\overline{\partial}$-Poincar\'{e} Lemma on $U_{\alpha}$, there exists a smooth form $\eta$ of type $(01)$ on $U_{\alpha}$ such that 
\begin{equation}\label{A02} A^{02}=\overline{\partial}\eta\,\,\,\,\,\text{on}\,\,\, U_{\alpha}\,.
\end{equation}
Let us assume that $\widehat{A}=\pr_{1}^{*}A^{11}$ where $A^{11}$ is a real form of type $(11)$ on $U_{\alpha}$ . Then, equation \eqref{Eq5} is equivalent to 
\begin{equation}\label{A11-eta}
    \overline{\partial}(A^{11}-\partial\eta)=0\,\,\,\,\,\text{on}\,\,\, U_{\alpha}\,.
\end{equation} Again, by using the local $\overline{\partial}$-Poincar\'{e} Lemma on $U_{\alpha}$, a smooth form $\eta^{'}$ of type $(10)$ on $U_{\alpha}$ such that 
\begin{equation}\label{eta'} A^{11}-\partial\eta=\overline{\partial}\eta^{'}\,\,\,\,\,\text{on}\,\,\, U_{\alpha}\,.
\end{equation} Since $A^{11}$ is real, $A^{11}-\partial\eta-\overline{\partial\eta}$ is both $\partial$ and $\overline{\partial}$ closed form. Then, by the local $\partial\overline{\partial}$-Lemma on $U_{\alpha}$, there exists a smooth function $\chi\in C^{\infty}(U_{\alpha},\R)$ such that, on $U_{\alpha}$
\begin{equation}\label{chi}
\begin{aligned}
 &A^{11}-\partial\eta-\overline{\partial\eta}=i\partial\overline{\partial}\chi\\
 \implies &A^{11}=\partial\eta+\overline{\partial\eta}+i\partial\overline{\partial}\chi\,.
\end{aligned}   
\end{equation} So, we can see that the general solution of equation \eqref{A11-eta} is \eqref{chi}. Thus, for any choice of such a $\chi$, we get a desirable $A^{11}$ as well as $\widehat{A}$ such that the first condition \eqref{Eq5} is satisfied. By \eqref{A101-A002}, we observe that $$\partial A^{101}+\overline{\partial A^{101}}=\sum^{l}_{j=1}\left[\tilde{\theta}_{2j-1}\wedge\pr^{*}_1(\partial \beta_{2j,\alpha}^{01}+\overline{\partial\beta_{2j,\alpha}^{01}})+ \pr^{*}_1(\partial\beta_{2j-1,\alpha}^{01}+\overline{\partial\beta_{2j-1,\alpha}^{01}})\wedge\tilde{\theta}_{2j}  \right]\,.$$ Then, the second equation \eqref{Eq6} is equivalent to 
\begin{equation}\label{equiv eq}
\partial\beta_{j,\alpha}^{01}+\overline{\partial\beta_{j,\alpha}^{01}}=0\,\,\,\,\,\text{for all}\,\,\,j\in\{1,\ldots,2l\}\,.  
\end{equation}
Since $d\tilde{\theta}_{j}=0\,,$ by \eqref{cmparison eq}, the curvature of the connection $(\theta^{''}_{1,\alpha},\ldots,\theta^{''}_{2l,\alpha})$ is
\begin{equation}\label{eq:curvature}
(\pr_1^{*}d\beta_{1,\alpha},\ldots,\pr_1^{*}d\beta_{2l,\alpha})\,.
\end{equation}

\begin{theorem}\label{thm:productgcs}
Let $E$ be a principal $\T^{2l}$-bundle over an $n$-dimensional complex manifold $M$. Let 
$ \Theta^{\prime} := (\theta^{'}_1, \ldots, \theta^{'}_{2l})$ be any connection on $E$ corresponding to a decomposition $\T^{2l} = \prod_{i=1}^{2l} S^1$ of Lie groups. Let $\omega^{'} := \sum_{i=1}^{l} \theta^{'}_{2i-1} \wedge \theta^{'}_{2i} $. 
Let $\Omega$ be a local  generator of $\wedge^{(n,0)}(T^{*}M \otimes \mathbb{C})$
over the trivializing open set $U_{\alpha} \subset M$. Set
\begin{equation*}
	 \rho^{'} := e^{i \omega^{'}} \wedge \pi^{*} \Omega  
		\end{equation*}
Then we have the following
\begin{enumerate}
\setlength\itemsep{1em}
    \item The condition $d\rho^{'}=0$ gives a GCS of type $\dim_{\C}(M)$ if and only if the curvature of the connection $\Theta^{\prime}$ is of type $(1,1)$.
    \item Let $\{U_{\alpha},\phi_{\alpha}\}$ be a local trivialization. Then,  $\rho^{'}$ is equivalent (via $B$-field transformation and diffeomorphism) to the product GCS 
 $$(\phi^{-1}_{\alpha})^{*}(\rho^{'}|_{\pi^{-1}(U_{\alpha})})\cong e^{i\pr^{*}_2 {\omega}_{\T}} \wedge\pr_1^{*}\Omega \,.$$ 
 on every $U_{\alpha}\times\T^{2l}$ if and only if the curvature of the connection $\Theta^{'}$ is trivial.
\end{enumerate}  
\end{theorem}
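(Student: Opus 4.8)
The two parts are handled separately. Part~(1) is a short computation completing Proposition~\ref{gencx} with its converse; part~(2) uses the tri-graded bookkeeping assembled above. For part~(1) I would write $d\theta^{'}_{j}=\pi^{*}\chi^{'}_{j}$, so that $(\chi^{'}_{1},\dots,\chi^{'}_{2l})$ represents the curvature of $\Theta^{'}$. If each $\chi^{'}_{j}$ is of type $(1,1)$, then $\chi^{'}_{j}\wedge\Omega=0$ since $\Omega$ is of type $(n,0)$ with $n=\dim_{\C}M$; repeating the computation in the proof of Proposition~\ref{gencx} gives $d\omega^{'}\wedge\pi^{*}\Omega=0$, hence $d\rho^{'}=0$, and with the nondegeneracy~\eqref{eqnz} this yields a GCS of type $\deg\Omega=n$. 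Conversely, if $d\rho^{'}=0$, expanding $d\rho^{'}$ as in~\eqref{drho} shows its lowest-degree homogeneous component is $i\,d\omega^{'}\wedge\pi^{*}\Omega$, so $d\omega^{'}\wedge\pi^{*}\Omega=0$. Since $\theta^{'}_{1},\dots,\theta^{'}_{2l}$ span a subbundle complementary to $\pi^{*}T^{*}M$, the identity $d\omega^{'}\wedge\pi^{*}\Omega=\pm\sum_{j}\big(\pi^{*}(\chi^{'}_{2j-1}\wedge\Omega)\wedge\theta^{'}_{2j}-\pi^{*}(\chi^{'}_{2j}\wedge\Omega)\wedge\theta^{'}_{2j-1}\big)$ vanishes only if $\chi^{'}_{j}\wedge\Omega=0$ for every $j$; as $\chi^{'}_{j}\wedge\Omega=(\chi^{'}_{j})^{0,2}\wedge\Omega$ and wedging with the nowhere-zero $(n,0)$-form $\Omega$ is injective on $(0,2)$-forms, while $\chi^{'}_{j}$ is real, this forces $\chi^{'}_{j}$ to be of type $(1,1)$.

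For the sufficiency in part~(2), assume the curvature of $\Theta^{'}$ is trivial; by part~(1) it is then of type $(1,1)$, so all of the preceding analysis applies and, by~\eqref{main eq1} and Lemma~\ref{omegahat}, on each $U_{\alpha}\times\T^{2l}$ we have $\widetilde{\rho}_{\alpha}=e^{\widehat{B}}\rho^{'}_{1}$ with $\widehat{B}=A^{101}+\overline{A^{101}}+A^{002}+\overline{A^{002}}+\widehat{A}$, the real $(011)$-form $\widehat{A}$ being ours to choose. The plan is to choose $\widehat{A}$ so that $\widehat{B}$ is closed, i.e.\ so that~\eqref{Eq5} and~\eqref{Eq6} hold: take $\widehat{A}=\pr_{1}^{*}A^{11}$ and solve~\eqref{Eq5} by the local $\overline{\partial}$-Poincar\'e and $\partial\overline{\partial}$ lemmas on $U_{\alpha}$ exactly as in~\eqref{A02}--\eqref{chi}; then $d_{F}\widehat{A}=0$, so~\eqref{Eq6} reduces to~\eqref{equiv eq}, the vanishing of the $(1,1)$-component of each $d\beta_{j,\alpha}$. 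By~\eqref{eq:curvature} the forms $\pr_{1}^{*}d\beta_{j,\alpha}$ are the curvature of the connection $(\theta^{''}_{1,\alpha},\dots,\theta^{''}_{2l,\alpha})$, which is the pullback under $\phi_{\alpha}$ of the trivial curvature of $\Theta^{'}$; hence $d\beta_{j,\alpha}=0$ and~\eqref{equiv eq} holds. Thus $\widehat{B}$ is closed, $e^{\widehat{B}}$ is a genuine $B$-field transformation, and $\widetilde{\rho}_{\alpha}=e^{\widehat{B}}\rho^{'}_{1}$ exhibits the required equivalence.

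For the necessity I would argue as follows. Suppose $\widetilde{\rho}_{\alpha}$ is equivalent to $\rho^{'}_{1}$ on every $U_{\alpha}\times\T^{2l}$, say via a diffeomorphism $F_{\alpha}$ and a closed real $2$-form. Since a $B$-field transformation does not change the symplectic foliation, $F_{\alpha}$ carries fibers to fibers and covers a biholomorphism of $U_{\alpha}$; after absorbing this biholomorphism and a fibrewise symplectomorphism I expect to reduce to the case that the canonical line bundle of $\widetilde{\rho}_{\alpha}$ equals $e^{C}$ applied to that of $\rho^{'}_{1}$ for a closed real $2$-form $C$. Comparing with $\widetilde{\rho}_{\alpha}=e^{\widehat{B}_{0}}\rho^{'}_{1}$, where $\widehat{B}_{0}:=A^{101}+\overline{A^{101}}+A^{002}+\overline{A^{002}}$, and using that $(011)$-forms wedge trivially with $\rho^{'}_{1}$, the form $C-\widehat{B}_{0}$ has vanishing component of base-holomorphic degree $0$; reality of $C$ and $\widehat{B}_{0}$ then forces $C=\widehat{B}_{0}+\widehat{A}^{'}$ for a real $(011)$-form $\widehat{A}^{'}$, so $\widehat{B}_{0}+\widehat{A}^{'}$ is closed and~\eqref{Eq6} holds with $\widehat{A}=\widehat{A}^{'}$. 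Hence $\partial A^{101}+\overline{\partial A^{101}}=-d_{F}\widehat{A}^{'}$ is $d_{F}$-exact; since $\partial A^{101}+\overline{\partial A^{101}}=\sum_{j}\pm\,\pr_{1}^{*}\big((d\beta_{j,\alpha})^{1,1}\big)\wedge\tilde{\theta}_{\bullet}$ and $[\tilde{\theta}_{1}],\dots,[\tilde{\theta}_{2l}]$ is a basis of $H^{1}(\T^{2l})$, $d_{F}$-exactness forces $(d\beta_{j,\alpha})^{1,1}=0$ for all $j$. Together with $(d\beta_{j,\alpha})^{2,0}=(d\beta_{j,\alpha})^{0,2}=0$, which holds because the curvature is of type $(1,1)$ by part~(1), this gives $d\beta_{j,\alpha}=0$ for all $j$ and all $\alpha$, so by~\eqref{eq:curvature} the curvature of $\Theta^{'}$ is trivial.

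The hard part will be the reduction in the necessity direction that trades an arbitrary generalized complex equivalence for a pure $B$-field transformation while keeping the decomposition $\widetilde{\rho}_{\alpha}=e^{\widehat{B}_{0}+\widehat{A}}\rho^{'}_{1}$ intact; the nontriviality of $H^{1}(\T^{2l})$ — precisely where the classes $(d\beta_{j,\alpha})^{1,1}$ obstruct closing $\widehat{B}$ — is what makes this step substantive and what ultimately promotes the curvature to a genuine obstruction to a product description. Everything else is bookkeeping with the tri-grading $(r,p,q)$ and the relations~\eqref{Eq1}--\eqref{Eq6} already in hand.
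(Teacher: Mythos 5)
Your proposal is correct and follows essentially the same route as the paper: part (1) by isolating the lowest-degree component $d\omega^{'}\wedge\pi^{*}\Omega$ and using that wedging with $\Omega$ is injective on $(0,2)$-forms, and part (2) by reducing the closedness of $\widehat{B}$ to the constraints \eqref{Eq5}--\eqref{Eq6} and identifying \eqref{equiv eq} with the vanishing of the curvature. The ``hard part'' you flag is not actually needed here, since the paper reads the equivalence as the statement that the pullback under the given trivialization $\phi_{\alpha}$ is a pure $B$-field transform of the product (this is exactly how the hypothesis is unpacked in \eqref{prduct gcs} in the proof of Theorem \ref{thm:productgcs3}); and your treatment of a general, not-necessarily-pulled-back $\widehat{A}$ via the nontriviality of $[\tilde{\theta}_{j}]$ in $H^{1}(\T^{2l})$ is, if anything, slightly more complete than the paper's, which simply assumes $\widehat{A}=\pr_{1}^{*}A^{11}$.
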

\begin{proof}
    \begin{enumerate}
    \setlength\itemsep{1em}
        \item The sufficiency direction follows from the proof of Proposition \ref{gencx}.
\medskip
        
        For the other direction, let $\{U_{\alpha},\phi_{\alpha}\}$ be a local trivialization. Consider the connection $(\tilde{\theta}_{1},\ldots,\tilde{\theta}_{2l})$ on $U_{\alpha}\times\T^{2l}$ as defined in \eqref{tdtheta i}. Let $d\rho^{'}=0\,.$\\ Then, on $U_{\alpha}\times\T^{2l}$, we have
        \begin{equation*}
        \begin{aligned}
        &d(\phi^{-1}_{\alpha})^{*}(\rho^{'})=0\\
        \implies &d(e^{\widehat{B}}\rho_{1}^{'})=0\,, \quad \text{where}\; \rho_{1}^{'}=e^{i\pr_{2}^{*}\omega_{\T}}\wedge\pr_{1}^{*}\Omega \\
        \implies &d\widehat{B}\wedge\pr_{1}^{*}\Omega=0\\
        \implies &d(A^{101}+A^{002})\wedge\pr_{1}^{*}\Omega=0\,, \quad \text{as $\Omega$ is of type $(n,0)$}\\
        \implies &\overline{\partial}A^{101}=0\,,  \quad \text{as $d_F A^{002} = 0  $ }\\
        \implies
        &\sum^{l}_{j=1}\left[\tilde{\theta}_{2j-1}\wedge\pr^{*}_1(\overline{\partial} \beta_{2j,\alpha}^{01})+\pr^{*}_1(\overline{\partial} \beta_{2j-1,\alpha}^{01})\wedge\tilde{\theta}_{2j}\right]=0 \,,\quad \text{ see \eqref{A101-A002}}\\
        \implies &\overline{\partial} \beta_{j,\alpha}^{(0,1)} =0 \quad \text{for all $j$} \,.
        \end{aligned}
        \end{equation*}
        This shows that the $(0,2)$  component of the curvature is zero. Since the curvature is real, it follows that the $(2,0)$ component of the curvature is also zero. 
        
        \item  By part (1), the curvature is assumed to be of type $(1,1)$. Then it follows from \eqref{eq:curvature}, that the curvature is of the form 
      $$ (\pr_1^{*}\Omega_{1,\alpha},\ldots,\pr_1^{*}\Omega_{2l,\alpha})\, $$
     where $\Omega_{j,\alpha}=\partial\beta_{j,\alpha}^{01}+\overline{\partial\beta_{j,\alpha}^{01}}\,$ for all $j$.
        Then, by equation \eqref{equiv eq}, the GCS is a product on local trivializations if and only if the curvature is zero.  
    \end{enumerate}
\end{proof}

\begin{theorem}\label{thm:productgcs3}
 Let $E$ be a principal $G$-bundle over an $n$-dimensional complex manifold $M$ with structure group a symplectic manifold $(G,\omega_{G})$. If there exists a GCS, $\rho^{'}\,,$ of type $\dim_{\C}(M)$ such that, on each trivialization $\{U_{\alpha},\phi_{\alpha}\}$, it is equivalent (via $B$-field transformation and diffeomorphism) to the product GCS 
 $$(\phi^{-1}_{\alpha})^{*}(\rho^{'}|_{\pi^{-1}(U_{\alpha})})\cong
 e^{i\pr^{*}_2 {\omega}_{G}} \wedge\pr_1^{*}\Omega \,,$$ then $E$ is a flat symplectic $G$-bundle where $\Omega$ is a local  generator of $\wedge^{(n,0)}(T^{*}M \otimes \mathbb{C})\,.$
\end{theorem}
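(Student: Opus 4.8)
The plan is to localize the hypothesis over the trivializing sets, exploit that the leaf-wise symplectic form and the transverse complex structure of a regular GCS are intrinsic modulo $B$-field transformations and diffeomorphisms, and then globalize by a cocycle argument whose final and hardest step is a rigidity statement in the spirit of the one producing \eqref{equiv eq} in the proof of Theorem \ref{thm:productgcs}.

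Write $\widetilde{\rho}_{\alpha} := (\phi_{\alpha}^{-1})^{*}(\rho'|_{\pi^{-1}(U_{\alpha})})$ and $\rho_{1}' := e^{i\pr_{2}^{*}\omega_{G}}\wedge\pr_{1}^{*}\Omega$ on $U_{\alpha}\times G$. Over an overlap $U_{\alpha\beta} := U_{\alpha}\cap U_{\beta}$ one has $\widetilde{\rho}_{\beta} = \Psi_{\alpha\beta}^{*}\widetilde{\rho}_{\alpha}$, where $\Psi_{\alpha\beta} := \phi_{\alpha}\circ\phi_{\beta}^{-1}$ sends $(b,g)$ to $(b,\,g_{\alpha\beta}(b)\,g)$ and $g_{\alpha\beta}\colon U_{\alpha\beta}\to G$ is the transition cocycle of $E$. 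By hypothesis $\langle\widetilde{\rho}_{\alpha}\rangle = e^{B_{\alpha}}\cdot F_{\alpha}^{*}\langle\rho_{1}'\rangle$ for some closed real $2$-form $B_{\alpha}$ and some diffeomorphism $F_{\alpha}$ of $U_{\alpha}\times G$. Since a $B$-field transformation does not alter the underlying Poisson bivector, the symplectic foliation of $\widetilde{\rho}_{\alpha}$ is the image under $F_{\alpha}^{-1}$ of the foliation of $\rho_{1}'$ by the fibres $\{b\}\times G$; as the symplectic leaves of $\rho'$ are the fibres of $E$, this forces $F_{\alpha}$ to preserve the fibration, say $F_{\alpha}(b,g) = (\tau_{\alpha}(b),\,h_{\alpha}(b,g))$ with $\tau_{\alpha}\in\mathrm{Diff}(U_{\alpha})$ and $h_{\alpha}(b,\cdot)\in\mathrm{Diff}(G)$. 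Restricting to a fibre, and using that a $B$-transform leaves the leaf-wise symplectic form unchanged while a diffeomorphism pulls it back, one obtains that $h_{\alpha}(b,\cdot)\circ\phi_{\alpha,b}\colon E_{b}\to G$ is a symplectomorphism onto $(G,\omega_{G})$; in particular the symplectic form induced by $\rho'$ on the leaf $E_{b}$ makes $E_{b}$ symplectomorphic to $(G,\omega_{G})$ for every $b$. Passing to the leaf spaces, and using that the transverse complex structure of a regular GCS is likewise intrinsic, shows that each $\tau_{\alpha}$ intertwines the transverse complex structure of $\rho'$ with the complex structure of $M$.

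Next put $\Phi_{\alpha} := F_{\alpha}\circ\phi_{\alpha}\colon\pi^{-1}(U_{\alpha})\to U_{\alpha}\times G$, so that $\langle\rho'|_{\pi^{-1}(U_{\alpha})}\rangle = e^{\phi_{\alpha}^{*}B_{\alpha}}\cdot\Phi_{\alpha}^{*}\langle\rho_{1}'\rangle$, and set $\mathcal{G}_{\alpha\beta} := \Phi_{\alpha}\circ\Phi_{\beta}^{-1} = F_{\alpha}\circ\Psi_{\alpha\beta}\circ F_{\beta}^{-1}$. The $\mathcal{G}_{\alpha\beta}$ form a cocycle, cohomologous to $\{\Psi_{\alpha\beta}\}$ via $\{F_{\alpha}\}$ and hence describing the same bundle, and each $\mathcal{G}_{\alpha\beta}$ is, up to a $B$-transform, an automorphism of $\rho_{1}'$; so, by the previous paragraph, it is fibre-preserving, covers a biholomorphism of $U_{\alpha\beta}$, and has its fibre components in $\mathrm{Symp}(G,\omega_{G})$. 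Thus in the trivialization $\{\Phi_{\alpha}\}$ the bundle $E$ is a symplectic fibre bundle with fibre $(G,\omega_{G})$, which is the symplectic part of the conclusion. Flatness now reduces to the rigidity claim: an automorphism $H(b,g) = (b,\,h(b,g))$ of $\rho_{1}'$, up to a $B$-transform, covering the identity on the base must have $h$ independent of $b$, i.e.\ $h(b,\cdot) = \psi\in\mathrm{Symp}(G,\omega_{G})$. Granting this, $\{\mathcal{G}_{\alpha\beta}\}$ — after composing with the base biholomorphisms $\tau_{\alpha}$ — becomes locally constant with values in $\mathrm{Symp}(G,\omega_{G})$, which is precisely the statement that $E$ is flat and symplectic.

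The rigidity claim is the main obstacle, and I would prove it by redoing the computation behind \eqref{Eq1}--\eqref{equiv eq} in this generality. Writing $H^{*}(e^{i\pr_{2}^{*}\omega_{G}}\wedge\pr_{1}^{*}\Omega) = e^{i\,h^{*}\omega_{G}}\wedge\pr_{1}^{*}\Omega$ and decomposing $h^{*}\omega_{G}$ along the tri-grading \eqref{decomp}, only its components of type $(2,0,0)$, $(1,0,1)$ and $(0,0,2)$ survive the wedge with $\pr_{1}^{*}\Omega$; requiring that the result generate the line $\langle\rho_{1}'\rangle$ up to a closed $2$-form and a scaling function, together with the integrability constraints of $\rho_{1}'$ (the analogues of \eqref{Eq2}, \eqref{Eq3}, \eqref{Eq5}, \eqref{Eq6}), forces the $(1,0,1)$ and $(0,0,2)$ parts to be governed by a potential on the base — via the local $\overline{\partial}$- and $\partial\overline{\partial}$-lemmas on $U_{\alpha}$, as in \eqref{A02}--\eqref{chi} — and finally that the fibre-direction derivative of $h$ vanishes. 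The delicate point is that, unlike in Theorem \ref{thm:productgcs}, where a preferred connection normalizes the fibre-wise data so that $\widehat{\omega} = \pr_{2}^{*}\omega_{G}$ exactly (Lemma \ref{omegahat}, equations \eqref{omegabsame} and \eqref{hatomegabsame}), for a general Lie group $G$ there is no principal connection adapted to it, so the whole argument must be carried out inside the local product \eqref{decomp}; one must then check carefully that neither the $B$-field nor the scaling function of the canonical generator can absorb the residual ``curvature of the gluing'' term — which, as in \eqref{Eq5}--\eqref{chi}, is arranged by first fixing representatives on the contractible pieces $U_{\alpha}$ and only afterwards reading off its vanishing.
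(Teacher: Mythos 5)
Your treatment of the symplectic half is essentially the paper's argument in different clothing: the paper restricts the identity $\sum(\psi^{*}(B_{\alpha}+i\pr_2^{*}\omega_{G}))^{r0q}=\sum(B_{\beta}+i\pr_2^{*}\omega_{G})^{r0q}$ to a fibre via $\tilde{i}_{m}$, isolates the $(200)$-component, and uses the reality of $B_{\alpha},B_{\beta}$ to get $\phi_{\alpha\beta}(m)^{*}\omega_{G}=\omega_{G}$. (Minor caveat: the theorem asserts that the principal transition functions, i.e.\ the left translations $\phi_{\alpha\beta}(m)$, are symplectomorphisms; your cocycle $\mathcal{G}_{\alpha\beta}=F_{\alpha}\circ\Psi_{\alpha\beta}\circ F_{\beta}^{-1}$ only has fibre components in $\mathrm{Symp}(G,\omega_{G})$ that are not a priori translations, so one more step is needed to return to the principal cocycle.)

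The flatness half fails because the rigidity claim at its core is false. Take $G=\T^{2l}$, $U\subset\C^{n}$ open, and the gauge transformation $H(b,g)=(b,\,g+c(b))$ for a non-constant smooth $c:U\to\T^{2l}$. Then $H^{*}\tilde{\theta}_{j}=\tilde{\theta}_{j}+\pr_1^{*}dc_{j}$, so in the notation of \eqref{cmparison eq} one has $\beta_{j}=dc_{j}$, whose curvature $d\beta_{j}$ vanishes; the computation \eqref{Eq1}--\eqref{equiv eq} that you propose to redo shows precisely that the obstruction to writing $H^{*}\rho_{1}^{'}=e^{\widehat{B}}\rho_{1}^{'}$ with $\widehat{B}$ real and closed is $\partial\beta_{j}^{01}+\overline{\partial\beta_{j}^{01}}=(d(dc_{j}))^{1,1}=0$. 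Hence $H$ is a fibre-preserving automorphism of the product GCS up to a closed real $B$-transform, covering the identity, whose fibre component $h(b,\cdot)=\cdot+c(b)$ genuinely depends on $b$. So fibrewise automorphisms over the identity need not be constant along the base, your modified cocycle need not become locally constant, and the machinery of \eqref{A02}--\eqref{chi} cannot deliver $d_{F}$-constancy of $h$ --- it delivers its negation. What the product hypothesis actually pins down is the vanishing of the curvature of the induced connection (this is exactly Theorem \ref{thm:productgcs}(2)); flatness is the existence of \emph{some} flat structure, a condition invariant under gauge transformations such as $H$, whereas local constancy of the given transition functions is not. The paper's own route is different: having established the symplectic property, it asserts $(\psi^{-1})^{*}\rho_{0}=\rho_{0}$, deduces that $\psi$ preserves the eigenbundle $L$, and extracts $(\phi_{\alpha\beta})_{*m}=0$ from the block form \eqref{psi} by a type-versus-reality argument. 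Be aware that the same gauge transformation puts pressure on that step too, since $(\psi^{-1})^{*}\pr_2^{*}\omega_{G}$ acquires cross terms proportional to $d\phi_{\alpha\beta}$; in any case your proposed derivation of the rigidity claim cannot be repaired along the lines you indicate.
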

\begin{proof}
 Consider the map
$$\psi:U_{\alpha\beta}\times G\longrightarrow U_{\alpha\beta}\times G$$ defined by $$\psi(m,f)=(m,\phi_{\alpha\beta}(m)f)\,\,\,\,\,\text{for all $(m,f)\in U_{\alpha\beta}\times G$}\,,$$ where $U_{\alpha\beta}=U_{\alpha}\cap U_{\beta}$ and $\phi_{\alpha\beta}=\phi_{\alpha}\circ\phi^{-1}_{\beta}:U_{\alpha\beta}\longrightarrow G$ is the transition map. By the assumption on the GCS, there exists a real closed form $B_{\alpha}\in\Omega^{2}(U_{\alpha}\times G)\,,$ such that
\begin{equation}\label{prduct gcs}
(\phi^{-1}_{\alpha})^{*}(\rho^{'}|_{\pi^{-1}(U_{\alpha})})= e^{B_{\alpha}+i\pr^{*}_2 {\omega}_{G}} \wedge\pr_1^{*}\Omega \,.    
\end{equation}
On $U_{\alpha\beta}\times G\,,$ we again denote $B_{\alpha}|_{U_{\alpha\beta}\times G}$ and $B_{\beta}|_{U_{\alpha\beta}\times G}$ by $B_{\alpha}$ and $B_{\beta}\,,$ respectively.
Let $(B_{\alpha}+i\omega_{G})^{rpq}$ denote the $(rpq)$-component of $B_{\alpha}+i\omega_{G}$ corresponding to the decomposition of $\Omega^2_{\C}(U_{\alpha\beta}\times G)\,,$ as defined in \eqref{decomp}\,. Similar
meanings are assigned to $(\psi^{*}(B_{\alpha}+i\omega_{G}))^{rpq}\,,$ $(B_{\beta}+i\omega_{G})^{rpq}\,,$ and $(\psi^{*}(B_{\beta}+i\omega_{G}))^{rpq}\,.$
By \eqref{prduct gcs}, on $U_{\alpha\beta}\times G$, we have 
\begin{align*}
&\psi^{*}(e^{B_{\alpha}+i\pr_2^{*}\omega_{G}}\wedge\pr_1^{*}\Omega)= 
e^{B_{\beta}+i\pr_2^{*}\omega_{G}}\wedge\pr_1^{*}\Omega\\
\implies &e^{\psi^{*}(B_{\alpha}+i\pr_2^{*}\omega_{G})}\wedge\pr_1^{*}\Omega= 
e^{B_{\beta}+i\pr_2^{*}\omega_{G}}\wedge\pr_1^{*}\Omega\\
\implies &e^{\sum (\psi^{*}(B_{\alpha}+i\pr_2^{*}\omega_{G}))^{r0q}}\wedge\pr_1^{*}\Omega=
e^{\sum (B_{\beta}+i\pr_2^{*}\omega_{G})^{r0q}}\wedge\pr_1^{*}\Omega\,,\quad\text{as $\Omega$ is of type $(n,0)$}\\
\implies &\sum (\psi^{*}(B_{\alpha}+i\pr_2^{*}\omega_{G}))^{r0q}=\sum (B_{\beta}+i\pr_2^{*}\omega_{\T})^{r0q}\,.
\end{align*}
For $m\in U_{\alpha\beta}$, consider the map $\tilde{i}_{m}$, as defined in \eqref{tilde i}. Then,
\begin{align*}
&\sum \tilde{i}^{*}_{m}(\psi^{*}(B_{\alpha}+i\pr_2^{*}\omega_{G}))^{r0q}=\sum \tilde{i}^{*}_{m}(B_{\beta}+i\pr_2^{*}\omega_{G})^{r0q}\\
\implies &\tilde{i}^{*}_{m}(\psi^{*}(B_{\alpha}+i\pr_2^{*}\omega_{G}))^{200}=\tilde{i}^{*}_{m}(B_{\beta}+i\pr_2^{*}\omega_{G})^{200}\\
\implies &\phi_{\alpha\beta}(m)^{*}(\tilde{i}^{*}_{m}B_{\alpha}+i\omega_{G})=\tilde{i}^{*}_{m}B_{\beta}+i\omega_{G}\\
\implies &\phi_{\alpha\beta}(m)^{*}\omega_{G}=\omega_{G}\,,\quad\text{as $B_{\alpha}\,,$ $B_{\beta}\,$ and $\phi_{\alpha\beta}(m)$ are real\,.}
\end{align*}
This shows that $E$ is a symplectic $G$-bundle. Due to the property that $E$ is a symplectic bundle, one can see that $(\psi^{-1})^{*}$ preserves $e^{i\pr^{*}_2 {\omega}_{G}} \wedge\pr_1^{*}\Omega\,,$ i.e,
$$(\psi^{-1})^{*}\rho_0=\rho_0\,,\quad\text{where $\rho_0=e^{i\pr^{*}_2 {\omega}_{G}} \wedge\pr_1^{*}\Omega$}\,.$$
Let $L$ be the $+i$-eigenbundle (i.e, null space) of the product GCS\,, $\rho_0$\,. At a point $(m,f)\in U_{\alpha\beta}\times G$, $L$ can be written in the following form, 
\begin{equation}\label{L}
L_{(m,f)}=\left(T^{0,1}_{m}U_{\alpha\beta}\oplus (T^{1,0}_m U_{\alpha\beta} )^{*}\right)\oplus\{X-i\omega_{G,f}(X)\,|\,X\in T_{f}G\otimes\C\}\,.   
\end{equation}
For any $X+\eta\in L$, $\psi_{*}(X)+(\psi^{-1})^{*}\eta$ is again an element of $L$ which is verified from the following:  
\begin{align*}
(\psi_{*}(X)+(\psi^{-1})^{*}\eta)\cdot\rho_0&=\rho_0(\psi_{*}(X))+ (\psi^{-1})^{*}\eta\wedge\rho_0\\ &=(\psi^{-1})^{*}\left(\psi^{*}\left(i_{\psi_{*}(X)}(\psi^{-1})^{*}\rho_0\right)\right)+( \psi^{-1})^{*}(\eta\wedge\rho_0)\,,\quad\text{as $(\psi^{-1})^{*}\rho_0=\rho_0$}\\ 
&=(\psi^{-1})^{*}(i_{X}\rho_0+\eta\wedge\rho_0)\\
&=0\,,\quad\text{as $(X+\eta)\cdot\rho_0=0$}\,.
\end{align*}
It follows that 
\begin{equation}\label{compr}
   X+\eta\in L\,\quad\,\text{if and only if}\,\quad\,\psi_{*}(X)+(\psi^{-1})^{*}\eta \in L
\end{equation}
Note that for $(m,f)\in U_{\alpha\beta}\times G$,
\begin{equation}\label{psi}
\begin{aligned}
(\psi)_{*(m,f)}&=
   \begin{pmatrix}
       Id_{U_{\alpha\beta}}   &0\\
       (r_{f})_{*}\circ(\phi_{\alpha\beta})_{*m}    &(\phi_{\alpha\beta}(m))_{*f}
   \end{pmatrix}\,,\\
   \\
   (\psi^{-1})^{*}_{(m,f)}&=
   \begin{pmatrix}
       Id_{U_{\alpha\beta}}   & (\phi^{-1}_{\alpha\beta})^{*}_{m}\circ(r_{\phi_{\alpha\beta}(m)\cdot f})^{*}\\
       0   &(\phi^{-1}_{\alpha\beta}(m))^{*}_{f}
   \end{pmatrix}\,,
\end{aligned}
\end{equation}
where the map $r_{f}:G\longrightarrow G$ is the right translation by $f$.\\
Let $e\in G$ be the identity element and $Y\in T_{e}G \,.$ Then, for $(m,e)\in U_{\alpha\beta}\times G\,,$ we have 
\begin{align*}
 &\begin{pmatrix}
       (\psi)_{*}   &0\\
       0    &(\psi^{-1})^{*}
   \end{pmatrix}(Y-i\omega_{G,e}(Y))\\
   &=(\psi)_{*}(Y)-(\psi^{-1})^{*}(i\omega_{G,e}(Y))\\
   &=\big\{(\phi_{\alpha\beta}(m))_{*e}(Y)-i(\phi^{-1}_{\alpha\beta}(m))^{*}_{e}(\omega_{G,e}(Y))\big\}\,\quad\text{(By \eqref{psi})}\\
   &\quad-i\left((\phi^{-1}_{\alpha\beta})^{*}_{m}\circ(r_{\phi_{\alpha\beta}(m)})^{*}(\omega_{G,e}(Y))\right)\,,
\end{align*}
By \eqref{L} and \eqref{compr}, $\{Y-i\omega_{G,e}(Y)\}\in L_{(m,e)}$ implies that $$(\psi)_{*}(Y)-(\psi^{-1})^{*}(i\omega_{G,e}(Y))\in L_{(m,\phi_{\alpha\beta}(m))}\,.$$ Then, it follows that $$\eta:=-i\left((\phi^{-1}_{\alpha\beta})^{*}_{m}\circ(r_{\phi_{\alpha\beta}(m)})^{*}(\omega_{G,e}(Y))\right)\in (T^{1,0}_{m}U_{\alpha\beta})^{*}\,.$$ 
Since $(\phi^{-1}_{\alpha\beta})^{*}_{m}$ and $(r_{\phi_{\alpha\beta}(m)})^{*}$ both are real linear operators, and $\omega_{G,e}(Y)$ is a real $1$-form, we get $\overline{\eta}=-\eta$. This contradicts the fact that $\eta\in (T^{1,0}_{m}U_{\alpha\beta})^{*}\,,$ and so,
$$(\phi^{-1}_{\alpha\beta})^{*}_{m}\circ(r_{\phi_{\alpha\beta}(m)})^{*}(\omega_{G,e}(Y))=0\,.$$
Now $(r_{\phi_{\alpha\beta}(m)})^{*}$ is an isomorphism and $\omega_{G,e}$ is non-degenerate which shows that $(\phi^{-1}_{\alpha\beta})^{*}_{m}$ will
vanish. As $m\in U_{\alpha\beta}$ is arbitrary, we get
$$(\phi_{\alpha\beta})^{*}_{m}=0\,,\quad\text{for all $m$}\,.$$ Hence $E$ is a flat symplectic principal $G$-bundle.
\end{proof}

\begin{theorem}\label{thm:productgcs2}
 Let $E$ be a principal $\T^{2l}$-bundle over an $n$-dimensional complex manifold $M$. Then, $E$ is a flat symplectic $\T^{2l}$-bundle if and only if there exists a GCS, $\rho^{'}\,,$ of type $\dim_{\C}(M)$ such that, on each trivialization $\{U_{\alpha},\phi_{\alpha}\}$, it is equivalent (via $B$-field transformation and diffeomorphism) to the product GCS 
 $$(\phi^{-1}_{\alpha})^{*}(\rho^{'}|_{\pi^{-1}(U_{\alpha})})\cong
 e^{i\pr^{*}_2 {\omega}_{\T}} \wedge\pr_1^{*}\Omega \,,$$ where $\omega_{\T}$ is a symplectic form on $\T^{2l}$ and $\Omega$ is a local  generator of $\wedge^{(n,0)}(T^{*}M \otimes \mathbb{C})\,.$
\end{theorem}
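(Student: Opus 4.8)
The plan is to derive Theorem \ref{thm:productgcs2} from the two preceding results: Theorem \ref{thm:productgcs} supplies one implication and Theorem \ref{thm:productgcs3} the other. It is worth recording at the outset that, because $\T^{2l}$ is abelian, its action on itself by left translations preserves every translation-invariant symplectic form; in particular the form $\omega_\T$ of \eqref{tdomega} is invariant, so \emph{every} principal $\T^{2l}$-bundle is automatically a symplectic $\T^{2l}$-bundle. Hence the hypothesis ``$E$ is a flat symplectic $\T^{2l}$-bundle'' carries no more information than flatness of $E$, and ``flat symplectic'' in the conclusion of Theorem \ref{thm:productgcs3}, specialized to $G=\T^{2l}$, reduces to ``flat''.

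For the implication that a flat bundle carries such a GCS, fix a decomposition $\T^{2l}=\prod_{j=1}^{2l}S^1$ and write $E$ correspondingly as a fiber product of principal $S^1$-bundles $E_1,\ldots,E_{2l}$. Flatness of $E$ forces each $E_j$ to be flat, as one sees by projecting the holonomy representation of $E$ to the $j$-th factor; so each $E_j$ admits a flat connection $\theta'_j$, and $\Theta'=(\theta'_1,\ldots,\theta'_{2l})$ is a connection on $E$ adapted to the chosen decomposition whose curvature vanishes identically. Put $\omega'=\sum_{i=1}^{l}\theta'_{2i-1}\wedge\theta'_{2i}$ and $\rho'=e^{i\omega'}\wedge\pi^{*}\Omega$. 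The zero curvature is in particular of type $(1,1)$, so Theorem \ref{thm:productgcs}(1) yields $d\rho'=0$ and identifies $\rho'$ as a local generator of the canonical bundle of a GCS of type $\dim_{\C}M$. Since the curvature is moreover trivial, Theorem \ref{thm:productgcs}(2) gives that on each trivializing chart the pullback $(\phi^{-1}_\alpha)^{*}(\rho'|_{\pi^{-1}(U_\alpha)})$ is equivalent, up to a $B$-field transform and a diffeomorphism, to $e^{i\pr_2^{*}\omega_\T}\wedge\pr_1^{*}\Omega$, with $\omega_\T$ the invariant symplectic form \eqref{tdomega}. This produces the desired GCS.

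For the converse, assume $\rho'$ is a GCS of type $\dim_{\C}M$ which, on each $U_\alpha\times\T^{2l}$, is equivalent (via a $B$-field transform and a diffeomorphism) to $e^{i\pr_2^{*}\omega_\T}\wedge\pr_1^{*}\Omega$. Taking $G=\T^{2l}$ with $\omega_G=\omega_\T$ a translation-invariant, hence genuine, symplectic form on $\T^{2l}$, the hypotheses of Theorem \ref{thm:productgcs3} hold verbatim, and its conclusion is exactly that $E$ is a flat symplectic $\T^{2l}$-bundle. Combining the two implications completes the proof.

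The substantive arguments have already been carried out in Theorems \ref{thm:productgcs} and \ref{thm:productgcs3}; here the only points requiring attention are that a flat principal $\T^{2l}$-bundle admits a flat connection adapted to a splitting $\T^{2l}=\prod S^1$, so that Theorem \ref{thm:productgcs} is applicable, and that the $\omega_\T$ featuring in the product model of the two theorems can be taken to be one and the same invariant form, so that the two halves are genuinely converse to each other. I do not anticipate any obstacle beyond these bookkeeping checks.
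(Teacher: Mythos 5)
Your proposal is correct and follows essentially the same route as the paper, which likewise deduces the forward implication from Theorem \ref{thm:productgcs} and the converse from Theorem \ref{thm:productgcs3} applied with $G=\T^{2l}$. The bookkeeping points you flag (that a flat $\T^{2l}$-bundle admits a flat connection adapted to the splitting, and that ``symplectic'' is automatic since torus transition functions act by translations preserving $\omega_{\T}$) are exactly the details the paper's two-sentence proof leaves implicit, and you have verified them correctly.
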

\begin{proof}
One way is straightforward as one can construct such a GCS by using Theorem \ref{thm:productgcs}. The converse direction follows from Theorem \ref{thm:productgcs3}.
\end{proof}
\begin{remark}
     Theorem \ref{thm:productgcs2} and Theorem \ref{thm:productgcs3}  do not imply that all GCS of type $\dim_{\C}M$ is the product GCS on a trivializing open set. Even in the simplest case, it may happen that there exists a GCS which cannot be the product GCS in a trivializing open neighborhood. The following example demonstrates this.
\end{remark}

\begin{example}
    Let $E=F\times\C$ be the trivial bundle over $\C$ with symplectic fiber $F$ of dimension $2l$. Let $\omega_{F}$ be a symplectic form on $F$. Set 
     $$A:=iz\overline{z}\,dz\,.$$
    Let $\sigma$ be a real closed, but not exact, $1$-form on $F\,.$ Consider the following $2$-form 
    $$i\omega=i\omega_{F}+(A-\overline{A})\wedge\sigma\,.$$ Now $d(A-\overline{A})=-i\,(\overline{z}+z)\,dz\wedge d\overline{z}\,.$ This implies $d\omega\neq 0$ but $d(i\omega)\wedge dz=0\,.$\\ Note that $\omega^{l}\wedge dz\wedge d\overline{z}= \omega_F^{l}\wedge dz\wedge d\overline{z} \neq 0\,.$ Thus, we have a GCS of type $1$ given by $$\rho=e^{i\omega}\wedge dz\,.$$ If possible, let there exists a closed real $B\in\Omega^2(E)$ such that 
$$\rho=e^{B+i\omega_{F}}\wedge dz \,.$$
   Let $C^{rpq}$  denote the $(rpq)$-component of $C:=B+i\omega_{F}$ in the natural decomposition of  $\Omega^{2}_{\C}(F\times\C)\,,$ as given in \eqref{decomp}. Then, we have 
    \begin{align*}
    &\rho=e^{B+i\omega_{F}}\wedge dz\\
        \implies & e^{-\overline{A}\wedge\sigma}\wedge dz=e^{\sum C^{r0q}}\wedge dz\\
        \implies & -\overline{A}\wedge\sigma=C^{101}\,.
    \end{align*} 
Since $C^{011}$ is real, it is of the form $C^{011}=if\,dz\wedge d\overline{z}$ for some $f\in C^{\infty}(\C\times F,\R)\,.$ Let $d_{F}$ be the exterior derivative in fiber direction.
So, 
\begin{align*}
  & dC=0\\
  \implies & (dC)^{111} = 0\\
  \implies &(i\,d_{F}f+  i (z+\overline{z})\,\sigma)\wedge dz\wedge d\overline{z}=0\\
  \implies & d_{F}f+(z+\overline{z})\,\sigma=0\,.
\end{align*}
Fixing any $z\in\C-\{i\R\}$, we have 
$$d_{F}g=\sigma\,, \quad {\rm where} \quad g=-\frac{1}{z+\overline{z}}f(-,z)\in C^{\infty}(F,\R)\,.$$
This contradicts that $\sigma$ is not exact. If possible, let there exists a real automorphism $h$ on $F\times\C$ such that $$h^{*}\rho=e^{B+i\omega_{F}}\wedge dz\,.$$ 
Note that $h^{*}dz=dz\,,$ and $i\,h^{*}\omega\wedge dz=(B+i\omega_{F})\wedge dz\,.$ So, it follows that 
\begin{align*}
&ih^{*}\omega_{F}+h^{*}(-\overline{A}\wedge\sigma)= \sum C^{r0q}\\
\implies & h^{*}(-\overline{A}\wedge\sigma)=C^{101}
\end{align*}
Then, we can continue as before. Thus we conclude that $\rho$ is not equivalent to the product GCS.
\end{example}

\section{A spectral sequence for the generalized Dolbeault cohomology}\label{sseq} 

 A \textit{generalized holomorphic bundle} over a GC manifold $B$ consists of a complex vector bundle $W$ with a Lie algebroid connection
     $$D:\Gamma(\wedge^{i}L^{*}\otimes W)\longrightarrow\Gamma(\wedge^{i+1}L^{*}\otimes W)$$ satisfying $D\circ D=0$ (cf.\cite[Definition 4.27]{Gua})\,. For a generalized holomorphic bundle $(W,D)$, the Lie algebroid cohomology is defined as
     \begin{equation}\label{lie algebroid cohomology}
         H^{\bullet}(L,W)=\frac{\ker(D:\Gamma(\wedge^{\bullet}L^{*}\otimes W)\longrightarrow\Gamma(\wedge^{\bullet+1}L^{*}\otimes W))}{\img(D:\Gamma(\wedge^{\bullet-1}L^{*}\otimes W)\longrightarrow\Gamma(\wedge^{\bullet}L^{*}\otimes W))}.
     \end{equation}
    For any $2n$-dimensional GC manifold $B$ with canonical line bundle $U$, the corresponding involutive maximal isotropic subbundle $L$, and the operator $\Bar{\partial}$ as in equation \eqref{Bar partial} gives a Lie algebroid connection. Thus $\{U,\Bar{\partial}\}$ is a generalized holomorphic bundle over $B$ and also note that
    \begin{equation}\label{2cohom}
    GH^{n-\bullet}_{\Bar{\partial}}(B) = H^{\bullet}(L,U)\,.
    \end{equation}

Now coming back to our situation, let $L$ be the null space of the canonical line bundle of $E$, denoted as $U_{E}$, as in Theorem \ref{thm:productgcs}. On a local trivialization $\{U_{\alpha}\}$, for a local holomorphic coordinate system $(z_1,\cdots,z_{n})\in U_{\alpha}$, assume that, the GCS on $U_{\alpha}\times\T^{2l}$ is
\begin{equation}\label{product GCS}
(\phi^{-1}_{\alpha})^{*}(\rho_0|_{\pi^{-1}(U_{\alpha})})=e^{i\pr^{*}_2\omega_{\T}}\wedge\pr_1^{*}\Omega
\end{equation}
 where $\Omega=dz_1\wedge\cdots\wedge dz_{n}$ and the null space is 
 $$L|_{\pi^{-1}(U_{\alpha})}\,=\,\pr_1^{*}(T^{\,0,1}U_{\alpha}\oplus (T^{\,1,0}U_{\alpha})^{*})\oplus\pr_2^{*}\{X-i\omega_{\T}(X)\,|\,X\in T(\T^{2l})\otimes\C\}.$$
Further, note that,
$$\pr_2^{*}\{X-i\omega_{\T}(X)\,|\,X\in T(T^{2l})\otimes\C\}=\{X-i\pr_2^{*}\omega_{\T}(X)\,|\,X\in \pr_2^{*}T(\T^{2l})\otimes\C\}.$$ Consider the Courant involutive subbundle $S<L$ such that on local trivialization $$S|_{\pi^{-1}(U_{\alpha})}\,=\,\{X-i\pr_2^{*}\omega_{\T}(X)\,|\,X\in \pr_2^{*}T(\T^{2l})\otimes\C\}.$$ Then following \cite[Section 2]{angella}, for any generalized holomorphic bundle $V$ over $E$, the subspaces 
$$F^{p}\Gamma(\wedge^{p+q}L^{*}\otimes V)=\{\phi\in\Gamma(\wedge^{p+q}L^{*}\otimes V)\,|\,\phi(X_1,\cdots,X_{p+q})=0\,\,\text{for}\,\,X_{n_{1}},\cdots,X_{n_{q+1}}\in S\}$$
of $\Gamma(\wedge^{\bullet}L^{*}\otimes V)$ give a bounded decreasing filtration of $\{\Gamma(\wedge^{\bullet}L^{*}\otimes V),D\}$  such that the corresponding spectral sequence $\{E^{\bullet,\bullet}_{r}\}_{r}$ converges to the Lie algebroid cohomology $H^{\bullet}(L,V)$ described in \eqref{lie algebroid cohomology}. By definition, 
\begin{align*}
E^{p,q}_{0}&=\frac{F^{p}\Gamma(\wedge^{p+q}L^{*}\otimes V)}{F^{p+1}\Gamma(\wedge^{p+q}L^{*}\otimes V)}\\
&=\frac{\{\phi\in\Gamma(\wedge^{p+q}L^{*}\otimes V)\,|\,\phi(X_1,\cdots,X_{p+q})=0\,\,\text{for}\,\,X_{n_{1}},\cdots,X_{n_{q+1}}\in S\}}{\{\phi\in\Gamma(\wedge^{p+q}L^{*}\otimes V)\,|\,\phi(X_1,\cdots,X_{p+q})=0\,\,\text{for}\,\,X_{n_{1}},\cdots,X_{n_{q}}\in S\}}.
\end{align*} 
Locally, we have, $$F^{p}\Gamma(\wedge^{p+q}L^{*}\otimes V)=\bigoplus_{p\leq i\leq p+q}\Gamma\left(\wedge^{i}\pr_1^{*}(L^{*}_{M}|_{U_{\alpha}})\otimes\wedge^{p+q-i}S^{*}|_{\pi^{-1}(U_{\alpha})}\right)\otimes_{C^{\infty}(U_{\alpha}\times\T^{2l}, \C)}\Gamma(V).$$ If $V=\pi^{*}(V^{'})$, for a holomorphic vector bundle $V^{'}$ over $M$ and $L_{M}=T^{\,0,1}M\oplus (T^{\,1,0}M)^{*}$, 
then
\begin{align*}
E^{p,q}_{0}
&\cong\Gamma(\wedge^{p}\pr_1^{*}(L^{*}_{M}|_{U_{\alpha}})\otimes\pr_1^{*}(V^{'}|_{U_{\alpha}}))\otimes_{C^{\infty}(U_{\alpha}\times\T^{2l},\C)}\Gamma(\wedge^{q}S^{*}|_{\pi^{-1}(U_{\alpha})})\\
&\cong\Gamma(\pr_1^{-1}(\wedge^{p}(L^{*}_{M}|_{U_{\alpha}})\otimes V^{'}|_{U_{\alpha}}))\otimes_{C^{\infty}(U_{\alpha}, \C)}\Gamma(\wedge^{q}S^{*}|_{\pi^{-1}(U_{\alpha})})\\
&\cong\Gamma(\wedge^{p}(L^{*}_{M}|_{U_{\alpha}})\otimes V^{'}|_{U_{\alpha}})\otimes_{C^{\infty}(U_{\alpha},\C)}\Gamma(\wedge^{q}S^{*}|_{\pi^{-1}(U_{\alpha})}).\,\,\,\,
\end{align*}
The differential $d_0$ on  $E_0^{p,q}$ is given by  $id \otimes d_{S}$ where $d_{S}$ is the differential on the Lie algebroid complex $\Gamma(\wedge^{\bullet}S^{*})$. For $b\in M$ and $E_{b}=\pi^{-1}(b)$, by \cite[Section 9.2]{voisin02}, we get a flat holomorphic vector bundle $\mathcal{H}^{\bullet}=\cup_{b\in M} H^{\bullet}(E_{b},\C)$ over $M$ where $H^{\bullet}(E_{b},\C)$ denotes the $\C$-valued de Rham cohomology of $E_{b}$. Now, consider the Lie algebroid corresponding to the relative tangent bundle $\mathcal{T}$ of the principal bundle $E$,  and the corresponding Lie algebroid cohomology $H^{\bullet}(\mathcal{T})$. Then, by \cite[Chapter I.2.4]{hattori60}, 
\begin{equation}\label{hattori}
H^{\bullet}(\mathcal{T})\cong\Gamma(M,\mathcal{H}^{\bullet})\, .
\end{equation} 
 Since $T(U_{\alpha}\times\T^{2l})=\pr_1^{*}T(U_{\alpha})\oplus\pr_2^{*}T(\T^{2l})$, we have $\mathcal{T}|_{\pi^{-1}(U_{\alpha})}=\pr_2^{*}T(\T^{2l})$. Moreover, as $\omega_{\T}$ is closed, we have a Lie algebroid isomorphism, $$\mathcal{T}|_{\pi^{-1}(U_{\alpha})} \stackrel{\cong}\longrightarrow S|_{\pi^{-1}(U_{\alpha})}, \quad
X\mapsto X-i\pr^{*}_2\omega_{\T}(X)\,.$$ 
Applying \eqref{hattori}, locally we have, 
$$E^{p,q}_1\cong\Gamma(\wedge^{p}(L^{*}_{M}|_{U_{\alpha}})\otimes V^{'}|_{U_{\alpha}})\otimes_{C^{\infty}(U_{\alpha}, \C)}\Gamma(U_{\alpha},\mathcal{H}^{q}|_{U_{\alpha}}) $$ with the differential $d_1=\bar{\partial}_{M}$, the usual Dolbeault operator on $M$. Hence, globally we have, 
$$E^{p,q}_1\cong\Gamma(\wedge^{p}L^{*}_{M}\otimes V^{'}\otimes\mathcal{H}^{q})$$ with the differential $d_1$ being the Lie algebroid connection for the holomorphic bundle $V^{'}\otimes\mathcal{H}^{\bullet}$.
Hence, we obtain, 
$$E^{p,q}_2\cong H^{p}(L_{M},V^{'}\otimes\mathcal{H}^{q}).$$ Thus, we have a description of the generalized Dolbeault cohomology of the total space which extends the description in \cite[Theorem 2.1]{angella}.

\begin{theorem}\label{main2}
 Let $\pi:E\longrightarrow M$ be a fiber bundle over a complex manifold  $M$ of complex dimension $n$ with a symplectic fiber $(F,\omega_{F})\,.$
Assume that there exists $\omega\in\Omega^{2}(E)$ such that 
\begin{enumerate}
\setlength\itemsep{1em}
    \item it defines a generalized complex structure $\mathcal{J}$ on $E$ which is locally of the form $\rho:=\,e^{i\omega}\wedge\pi^{*}(\Omega)\,,$
    \item on each local trivialization $\{U_{\alpha},\phi_{\alpha}\}$, the GCS is equivalent (via $B$-field transformation and diffeomorphism) to the product GCS as in \eqref{product GCS}, i.e
    \begin{equation*}
(\phi^{-1}_{\alpha})^{*}(\rho|_{\pi^{-1}(U_{\alpha})})\cong e^{i\pr^{*}_2\omega_{F}}\wedge\pr_1^{*}\Omega\,.
\end{equation*}
\end{enumerate}
Here, $\Omega$ is a local  generator of $\wedge^{(n,0)}(T^{*}M \otimes \mathbb{C})\,.$ Let $L$ be the $+i$-eigenbundle of $\mathcal{J}$. Let $V$ be a complex vector bundle over $E$ such that $V=\pi^{*}V^{'}$ for a holomorphic vector bundle $V^{'}$ over the complex manifold $M$. Considering $V$ as a generalized holomorphic bundle, there exists a spectral sequence $\{E^{\bullet,\bullet}_{r}\}_{r}$ which converges to $H^{\bullet}(L,V)$ such that $$E^{p,q}_2\cong H^{p}(L_{M},V^{'}\otimes\mathcal{H}^{q})\,.$$ 
 \end{theorem}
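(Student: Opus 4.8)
The plan is to turn the computation outlined in the paragraphs preceding the statement into a clean filtered-complex argument. The central object is the subbundle $S\subset L$ which, on a trivialization $\{U_\alpha,\phi_\alpha\}$ over which --- after the $B$-field transform and diffeomorphism supplied by hypothesis (2) --- the structure is the product $e^{i\pr_2^*\omega_F}\wedge\pr_1^*\Omega$, is
\[ S|_{\pi^{-1}(U_\alpha)}=\{X-i\,\pr_2^*\omega_F(X)\ \mid\ X\in\pr_2^*T(F)\otimes\C\}. \]
First I would verify that these local descriptions patch to a globally defined Courant-involutive subbundle of $L$: intrinsically $S$ is the copy of the complexified vertical bundle of $\pi$ sitting inside $L$ as the graph of $-i$ times the leafwise symplectic form, and since hypotheses (1) and (2) force the fibers of $\pi$ to be precisely the symplectic leaves of $\mathcal J$, this is a sub-Lie-algebroid of $L$ (following \cite[Section 2]{angella}). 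Declaring $F^p\Gamma(\wedge^{p+q}L^*\otimes V)$ to consist of the sections vanishing whenever $q+1$ of the arguments lie in $S$ then gives a decreasing filtration of the complex $\{\Gamma(\wedge^\bullet L^*\otimes V),D\}$ computing $H^\bullet(L,V)$; since $\wedge^\bullet L^*$ has finite rank the filtration is bounded, so the associated spectral sequence $\{E_r^{\bullet,\bullet}\}$ converges to $H^\bullet(L,V)$.

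Next I would compute pages $0$, $1$ and $2$. A $B$-field transform together with a fiber-preserving diffeomorphism (which may be taken to cover $\id$ on the base, hence to fix $\pr_1^*V'$ and $\pr_1^*\wedge^\bullet L^*_M$) induces an isomorphism of the pair $(\mathcal J,\,V=\pi^*V')$, so $E_0$ and $d_0$ may be read off from the product model. There $\wedge^\bullet L^*|_{\pi^{-1}(U_\alpha)}\cong\pr_1^*(\wedge^\bullet L^*_M|_{U_\alpha})\otimes\wedge^\bullet S^*|_{\pi^{-1}(U_\alpha)}$ with $L_M=T^{0,1}M\oplus(T^{1,0}M)^*$, and together with $V=\pi^*V'$ this gives
\[ E_0^{p,q}\cong\Gamma\big(\wedge^p(L^*_M|_{U_\alpha})\otimes V'|_{U_\alpha}\big)\otimes_{C^\infty(U_\alpha,\C)}\Gamma\big(\wedge^q S^*|_{\pi^{-1}(U_\alpha)}\big), \]
with $d_0=\id\otimes d_S$ the fiberwise Lie algebroid differential of $S$, as the tri-graded decomposition of $D$ shows. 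Because $\omega_F$ is closed, $X\mapsto X-i\,\pr_2^*\omega_F(X)$ is a Lie algebroid isomorphism from the relative tangent bundle $\mathcal T|_{\pi^{-1}(U_\alpha)}$ onto $S|_{\pi^{-1}(U_\alpha)}$, so Hattori's theorem \eqref{hattori} identifies $H^\bullet(S|_{\pi^{-1}(U_\alpha)})\cong\Gamma(U_\alpha,\mathcal H^\bullet|_{U_\alpha})$, where $\mathcal H^\bullet=\bigcup_{b\in M}H^\bullet(E_b;\C)$ is the flat Gauss--Manin bundle of fiber cohomologies. Taking $d_0$-cohomology therefore yields $E_1^{p,q}\cong\Gamma(\wedge^p(L^*_M|_{U_\alpha})\otimes V'|_{U_\alpha})\otimes_{C^\infty(U_\alpha,\C)}\Gamma(U_\alpha,\mathcal H^q|_{U_\alpha})$ locally, and these glue to the global statement $E_1^{p,q}\cong\Gamma(\wedge^p L^*_M\otimes V'\otimes\mathcal H^q)$. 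The induced differential $d_1$ is the $(0,1)$-Lie-algebroid connection $\overline{\partial}_M$ on the holomorphic bundle $V'\otimes\mathcal H^\bullet$ over $M$ ($\mathcal H^\bullet$ being holomorphic through its flat connection), so $E_2^{p,q}\cong H^p(L_M,V'\otimes\mathcal H^q)$.

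I expect the main obstacle to be the globalization folded into the previous paragraph, namely checking that the locally defined isomorphisms $E_r^{p,q}\cong(\text{local model})$ are compatible on overlaps --- equivalently that $S$, and the identification of $d_0$-cohomology with $\Gamma(\mathcal H^\bullet)$, do not depend on the chosen local product description. This is where hypotheses (1) and (2) are genuinely needed: (2) ensures the transition maps of $E$ act on vertical de Rham cohomology through the monodromy defining the flat bundle $\mathcal H^\bullet$, so that $\mathcal H^\bullet$ with its holomorphic structure is the correct global coefficient object, while (1) guarantees that $S$ is a globally well-defined Courant-involutive subbundle, isomorphic as a Lie algebroid to the relative tangent bundle of $\pi$. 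Once this naturality is established, the convergence to $H^\bullet(L,V)$ and the identification $E_2^{p,q}\cong H^p(L_M,V'\otimes\mathcal H^q)$ follow from the standard formalism of filtered complexes exactly as in \cite[Theorem 2.1, Corollary 2.2]{angella}, the only difference being that the hypothesis there of a global closed $2$-form on $E$ restricting to $\omega_F$ on the fibers is weakened to the local product assumption (2).
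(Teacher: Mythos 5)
Your proposal follows essentially the same route as the paper: the paper's proof of this theorem is precisely the construction laid out in the paragraphs preceding it --- the Courant-involutive subbundle $S<L$, the filtration $F^p\Gamma(\wedge^{p+q}L^*\otimes V)$, the identification of $E_1$ via Hattori's theorem and the Lie algebroid isomorphism $X\mapsto X-i\pr_2^*\omega_F(X)$, and the resulting $E_2$-page. Your additional attention to the gluing of the local product models on overlaps is a point the paper treats only implicitly, but it does not change the argument.
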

 \begin{proof}
 Follows from the preceding description of the generalized Dolbeault cohomology of the total space.
 \end{proof}
 
 Since $U_{E}=\pi^{*}U_{M}$, where $U_{M}$ is the canonical line bundle for $M$, we have the following theorem.
 \begin{theorem}\label{main3}
 Consider the same setting as in the preceding theorem and $\dim_{\R} F= 2l$. Then there exists a spectral sequence $\{E^{\bullet,\bullet}_{r}\}_{r}$ which converges to $GH^{n+l-\bullet}_{\Bar{\partial}}(E)$ such that $$E^{p,q}_{2}\cong GH^{n-p}_{\Bar{\partial}}(M,\mathcal{H}^{l-q}).$$
 \end{theorem}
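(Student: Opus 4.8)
The plan is to obtain Theorem \ref{main3} as a direct corollary of Theorem \ref{main2} by specializing the coefficient bundle and reindexing the spectral sequence via the isomorphism \eqref{2cohom}. First I would take $V' = U_M$, the canonical line bundle of the complex manifold $M$, so that $V = \pi^{*} U_M$. The key observation, already noted in the paragraph preceding the statement, is that the canonical line bundle $U_E$ of the GCS on $E$ equals $\pi^{*} U_M$; this is visible locally from \eqref{product GCS}, where $(\phi_\alpha^{-1})^{*}(\rho_0|_{\pi^{-1}(U_\alpha)}) = e^{i \pr_2^{*}\omega_{\T}} \wedge \pr_1^{*}\Omega$ and the only factor carrying cotangent directions from the base is $\pr_1^{*}\Omega$, a local generator of $U_M$. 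Hence, with this choice of $V$, Theorem \ref{main2} supplies a spectral sequence $\{E_r^{\bullet,\bullet}\}$ converging to $H^{\bullet}(L, U_E)$ with $E_2^{p,q} \cong H^{p}(L_M, U_M \otimes \mathcal{H}^{q})$.

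Next I would translate both the abutment and the $E_2$ page from Lie algebroid cohomology into generalized Dolbeault cohomology using \eqref{2cohom}, which reads $GH^{N-\bullet}_{\bar\partial}(B) = H^{\bullet}(L, U)$ for a $2N$-dimensional GC manifold $B$. For the total space $E$ we have $\dim_{\C}$-type data giving real dimension $2n + 2l$, so $N = n + l$ and the abutment $H^{\bullet}(L, U_E)$ becomes $GH^{n+l-\bullet}_{\bar\partial}(E)$. For the base $M$, which has real dimension $2n$, the same identity gives $H^{p}(L_M, U_M \otimes \mathcal{H}^{q}) \cong GH^{n-p}_{\bar\partial}(M, \mathcal{H}^{q})$, where the coefficient version of \eqref{2cohom} applies because $\mathcal{H}^{q}$ is a flat holomorphic bundle and hence $U_M \otimes \mathcal{H}^{q}$ is a generalized holomorphic bundle over $M$. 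Substituting, $E_2^{p,q} \cong GH^{n-p}_{\bar\partial}(M, \mathcal{H}^{q})$.

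The remaining point is purely a matter of bookkeeping on the index $q$: Theorem \ref{main3} states $E_2^{p,q} \cong GH^{n-p}_{\bar\partial}(M, \mathcal{H}^{l-q})$ and abutment $GH^{n+l-\bullet}_{\bar\partial}(E)$, so I would compare the fiber-degree convention of \cite{angella} used in the construction of the filtration $F^{p}$ with the grading on $U^{\bullet}$. Concretely, the filtration by $S$ has associated graded indexed by the number of $S$-slots, i.e. by the fiber cohomological degree of $\mathcal{H}$, which ranges over $0, \ldots, 2l$; but the generalized Dolbeault grading on the symplectic fiber is $GH^{\bullet}_{\bar\partial}(F) = H^{l-\bullet}(F;\C)$ by \eqref{symplectic}, so a fiber class in $\mathcal{H}^{q}$ sits in $U^{l-q}$-degree on the fiber side. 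Tracking this shift through the total grading $U^{\bullet}(E)$ — base $U^{\bullet}$-degree plus fiber $U^{\bullet}$-degree — converts the exponent $q$ into $l-q$ in the coefficient and accounts for the $n+l$ in the abutment shift. I expect this index reconciliation to be the only genuinely delicate step; the rest is formal. Once the convention is pinned down, one simply asserts: \emph{follows from Theorem \ref{main2} together with \eqref{2cohom}, \eqref{symplectic}, and the identity $U_E = \pi^{*}U_M$.}
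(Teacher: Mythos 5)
Your proposal matches the paper's argument: the paper deduces Theorem \ref{main3} from Theorem \ref{main2} precisely by taking $V'=U_M$, using $U_E=\pi^{*}U_M$, and reinterpreting the Lie algebroid cohomologies via \eqref{2cohom}, with the $l-q$ shift coming from \eqref{symplectic} exactly as you describe. Your write-up is in fact more detailed than the paper's one-line justification, but the route is the same.
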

\begin{corollary}\label{main4}
For a flat $\T^{2l}$-principal bundle $E$ with the family of GCS as defined in Proposition \ref{gencx}, there exists a spectral sequence $\{E^{\bullet,\bullet}_{r}\}_{r}$ which converges to $GH^{n+l-\bullet}_{\Bar{\partial}}(E)$ such that $$E^{p,q}_{2}\cong GH^{n-p}_{\Bar{\partial}}(M,\mathcal{H}^{l-q}).$$
\end{corollary}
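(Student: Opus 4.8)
The plan is to verify that a flat principal $\T^{2l}$-bundle $E\to M$, equipped with one of the GCS from Proposition \ref{gencx}, satisfies the hypotheses of Theorem \ref{main3}, and then simply invoke that theorem. There is essentially nothing new to prove here beyond checking that the setup matches; the substantive work was already done in establishing Theorem \ref{thm:productgcs} and Theorem \ref{main3}.

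First I would recall that, by Proposition \ref{gencx}, a principal $\T^{2l}$-bundle over $M$ with characteristic class of type $(1,1)$ admits a family of regular GCS, each locally generated by a form of the shape $\rho = e^{\eta + i\omega}\wedge\pi^{*}\Omega$, and that after a global closed $B$-field transformation by $-\eta$ we may assume the local generator is $\rho_0 = e^{i\omega}\wedge\pi^{*}\Omega$ as in \eqref{rho_0}; by \eqref{isomorphism} this does not change the generalized Dolbeault cohomology up to isomorphism. In particular condition (1) of Theorem \ref{main2}/\ref{main3} holds with the torus fiber $F = \T^{2l}$ carrying the invariant symplectic form $\omega_{\T}$ of \eqref{tdomega}, since $\omega_b = i_b^{*}\omega = \omega_{\T}$ for every $b$ by \eqref{omegabsame}.

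Next, since $E$ is flat, the connection $\Theta'$ underlying $\omega$ can be chosen flat, so its curvature is trivial; hence Theorem \ref{thm:productgcs}(2) applies and tells us that on every trivializing chart $U_{\alpha}\times\T^{2l}$ the GCS is equivalent, via a $B$-field transformation and a diffeomorphism, to the product GCS $e^{i\pr_2^{*}\omega_{\T}}\wedge\pr_1^{*}\Omega$. This is precisely condition (2) of Theorem \ref{main2} with $(F,\omega_F) = (\T^{2l},\omega_{\T})$. Thus both hypotheses of Theorem \ref{main3} are met, with $\dim_{\R}F = 2l$, and taking $V' = \mathcal{O}_M$ (so $V = \pi^{*}\mathcal{O}_M$ is trivial, and $H^{\bullet}(L,U_E)$ computes $GH^{\bullet}_{\bar\partial}(E)$ via \eqref{2cohom}) yields a spectral sequence $\{E_r^{\bullet,\bullet}\}_r$ converging to $GH_{\bar\partial}^{n+l-\bullet}(E)$ with $E_2^{p,q}\cong GH_{\bar\partial}^{n-p}(M,\mathcal{H}^{l-q})$, as claimed.

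The only point requiring a word of care — and the closest thing to an obstacle — is that the statement of Theorem \ref{thm:productgcs} is phrased for the particular connection and generator built in Section 4, whereas Proposition \ref{gencx} allows an arbitrary closed $\eta$ and an arbitrary $(1,1)$-connection; one must observe that any such GCS is related to the normalized one $\rho_0$ by a global $B$-transform, and that the flatness hypothesis lets us choose the connection in the distinguished family \eqref{theta i} to be flat, so that \eqref{eq:curvature} gives vanishing curvature and Theorem \ref{thm:productgcs}(2) is literally applicable. Once this identification is made, the corollary is immediate from Theorem \ref{main3}.
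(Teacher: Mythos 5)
Your proposal is correct and follows essentially the same route the paper intends: flatness plus Theorem \ref{thm:productgcs}(2) (after absorbing the closed form $\eta$ by a global $B$-transform, which by \eqref{isomorphism} leaves the generalized Dolbeault cohomology unchanged) verifies the product-GCS hypothesis of Theorem \ref{main3}, which then yields the spectral sequence directly. One small slip worth noting: to recover $GH^{\bullet}_{\bar{\partial}}(E)=H^{n+l-\bullet}(L,U_{E})$ via \eqref{2cohom} one must take $V'=U_{M}$ (so that $V=\pi^{*}U_{M}=U_{E}$), not $V'=\mathcal{O}_{M}$; since this choice is already built into the statement of Theorem \ref{main3}, it does not affect your conclusion.
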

\begin{example}(Generalized Dolbeault cohomology of trivial torus bundles)
When the torus bundle is trivial $E=M\times\T^{2l}$, the flat holomorphic vector bundle $\mathcal{H}^{\bullet}$ is also trivial i.e, $\mathcal{H}^{\bullet}=M\times H^{\bullet}(\T^{2l},\C)$. So the $E^{p,q}_{2}$ term of the spectral sequence as in Theorem \ref{main2} is of the form $$E^{p,q}_2\cong H^{p}(L_{M},U_{M}\otimes H^{l-q}(\T^{2l},\C))=GH^{q}_{\bar{\partial}}(\T^{2l})\otimes GH^{n-p}_{\bar{\partial}}(M).$$
Now, each element in $E_{2}$ term is already a global form on $M\times\T^{2l}$. Hence, $d_{k}$ vanishes for any $k\geq 2\,,$ and $E_2=E_{\infty}$. Therefore, we get the following analogue of the K\"{u}nneth formula

\begin{corollary}\label{main5} 
For the family of generalized complex structures as defined in Proposition \ref{gencx}, when $E=M\times\T^{2l}$, the generalized Dolbeault cohomology group of $E$ has a decomposition in terms of the generalized Dolbeault cohomology groups of the fiber space and the base manifold, i.e,  $$GH^{n+l-m}_{\bar{\partial}}(E)\,\cong\,\bigoplus_{p+q=m}\left(GH^{q}_{\bar{\partial}}(\T^{2l})\otimes GH^{n-p}_{\bar{\partial}}(M)\right)$$
where $-l\leq q\leq l$ , $-n\leq p\leq n$ and $-(n+l)\leq m\leq(n+l)$.
\end{corollary}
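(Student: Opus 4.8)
The plan is to deduce Corollary~\ref{main5} from Corollary~\ref{main4} by showing that, for $E = M \times \T^{2l}$, the associated spectral sequence degenerates at $E_2$. First I would apply Corollary~\ref{main4}: the product bundle is flat and the product GCS $\rho_0 = e^{i\,\pr_2^{*}\omega_{\T}} \wedge \pr_1^{*}\Omega$ is the member of the family of Proposition~\ref{gencx} coming from the trivial connection with $\eta = 0$, so there is a bounded (hence convergent) spectral sequence $\{E_r^{\bullet,\bullet}\}$ with $E_2^{p,q} \cong GH^{\,n-p}_{\overline{\partial}}(M,\mathcal{H}^{l-q})$ abutting to $GH^{\,n+l-\bullet}_{\overline{\partial}}(E)$. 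Since the fibers of $E$ are canonically identified, the Gauss--Manin local system $\mathcal{H}^{\bullet}$ has trivial monodromy; thus $\mathcal{H}^{\bullet} \cong M \times H^{\bullet}(\T^{2l};\C)$ as flat holomorphic bundles, so $GH^{\,n-p}_{\overline{\partial}}(M,\mathcal{H}^{l-q}) \cong GH^{\,n-p}_{\overline{\partial}}(M) \otimes_{\C} H^{\,l-q}(\T^{2l};\C)$, and by the symplectic computation \eqref{symplectic} applied with ambient real dimension $2l$ we have $H^{\,l-q}(\T^{2l};\C) \cong GH^{\,q}_{\overline{\partial}}(\T^{2l})$. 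Hence $E_2^{p,q} \cong GH^{\,n-p}_{\overline{\partial}}(M) \otimes GH^{\,q}_{\overline{\partial}}(\T^{2l})$.

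The key step is to represent these $E_2$-classes by honest global $\overline{\partial}_E$-closed forms on $E$. For the product GCS the $+i$-eigenbundle is $L_E = \pr_1^{*}L_M \oplus \pr_2^{*}L_{\T}$ and $U_E = \pr_1^{*}U_M \wedge \pr_2^{*}U_{\T}$; expanding $U_E^{k} = \wedge^{\,n+l-k}\overline{L_E}\cdot U_E$ gives the K\"{u}nneth-type splitting $U_E^{k} = \bigoplus_{i+j=k}\pr_1^{*}U_M^{i}\wedge\pr_2^{*}U_{\T}^{j}$. Since $d_E(\pr_1^{*}\beta\wedge\pr_2^{*}\gamma) = \pr_1^{*}d_M\beta\wedge\pr_2^{*}\gamma + (-1)^{\deg\beta}\,\pr_1^{*}\beta\wedge\pr_2^{*}d_{\T}\gamma$, comparing with the grading shows that $\beta\mapsto\pr_1^{*}\beta$ and $\gamma\mapsto\pr_2^{*}\gamma$ are cochain maps for $\overline{\partial}$, and that $\overline{\partial}_E(\pr_1^{*}\beta\wedge\pr_2^{*}\gamma) = \pr_1^{*}\overline{\partial}_M\beta\wedge\pr_2^{*}\gamma + (-1)^{\deg\beta}\,\pr_1^{*}\beta\wedge\pr_2^{*}\overline{\partial}_{\T}\gamma$ for $\beta\in\Gamma(U_M^{i})$, $\gamma\in\Gamma(U_{\T}^{j})$. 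Therefore the exterior product induces a bigraded map $\mu:\bigoplus_{i+j=k}GH^{\,i}_{\overline{\partial}}(M)\otimes GH^{\,j}_{\overline{\partial}}(\T^{2l})\to GH^{\,k}_{\overline{\partial}}(E)$. Classes of $GH^{\,\bullet}_{\overline{\partial}}(M)$ are by definition represented by global sections of $U_M^{\bullet}$; classes of $GH^{\,\bullet}_{\overline{\partial}}(\T^{2l})$ are represented, via the description of $U_{\T}^{\bullet}$ underlying \eqref{symplectic}, by translation-invariant forms, which are $d_{\T}$- and hence $\overline{\partial}_{\T}$-closed. So $\mu$ carries $GH^{\,n-p}_{\overline{\partial}}(M)\otimes GH^{\,q}_{\overline{\partial}}(\T^{2l})$ into classes with global $\overline{\partial}_E$-closed representatives. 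Tracing the construction in Section~\ref{sseq} --- whose $E_0,E_1,E_2$ identifications are fiberwise restriction followed by Gauss--Manin parallel transport, trivial here --- one checks that the isomorphism $E_2^{p,q}\cong GH^{\,n-p}_{\overline{\partial}}(M)\otimes GH^{\,q}_{\overline{\partial}}(\T^{2l})$ above is precisely the one induced by $\mu$.

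Granting this, the conclusion is formal: every class in $E_2^{p,q}$ has a global $\overline{\partial}_E$-closed representative, so $d_2$ vanishes identically, whence $E_3 = E_2$ and inductively $d_r = 0$ for all $r\ge 2$ and $E_2 = E_\infty$. The cochain-level lifts supplied by $\mu$ then split the induced bounded filtration on $GH^{\,n+l-m}_{\overline{\partial}}(E)$, and $\mu$ restricts to the asserted isomorphism $\bigoplus_{p+q=m}\bigl(GH^{\,q}_{\overline{\partial}}(\T^{2l})\otimes GH^{\,n-p}_{\overline{\partial}}(M)\bigr)\xrightarrow{\ \cong\ }GH^{\,n+l-m}_{\overline{\partial}}(E)$; when the cohomologies are finite-dimensional (e.g., $M$ compact) one may instead conclude by counting dimensions. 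The principal obstacle is the middle paragraph: verifying that the product GCS has the K\"{u}nneth-type behavior for both the $U^{\bullet}$-grading and the operator $\overline{\partial}_E$, and matching the abstract $E_2$-term with the concrete exterior-product map; once these are in place, the degeneration and the splitting are routine homological algebra.
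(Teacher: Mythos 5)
Your proposal is correct and follows essentially the same route as the paper: trivialize the local system $\mathcal{H}^{\bullet}$, identify $E_2^{p,q}$ with $GH^{n-p}_{\overline{\partial}}(M)\otimes GH^{q}_{\overline{\partial}}(\T^{2l})$, and observe that every $E_2$-class is represented by a globally defined $\overline{\partial}$-closed form on $M\times\T^{2l}$, forcing $d_r=0$ for $r\ge 2$. Your middle paragraph (the K\"unneth splitting of $U_E^{\bullet}$ and the exterior-product cochain map $\mu$) is a careful elaboration of the step the paper states in one sentence, not a different argument.
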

\end{example}
 \begin{remark}
In Theorem \ref{main2}, if the form $\omega$ is closed, one may construct a $B$-transformation so that the GCS is the product GCS on each trivializing neighborhood. But even if $\omega$ is not closed, it may still be possible to construct such a $B$-transformation. The following example will show such a construction in the simplest case.      
 \end{remark}  
 \begin{example}
Let $E=\C\times F$ be the trivial bundle over $\C$ with symplectic fiber $F$. Let $\omega_{F}$ is a symplectic form on $F$. Set
\begin{enumerate}
\setlength\itemsep{1em}
    \item $A_1:=(\frac{z^{2}}{2}+z\overline{z})\,d\overline{z}\,.$
    \item $A_2:=z\,d\overline{z}\,.$
\end{enumerate}
Let $\sigma$ be a real closed $1$-form on $F$. Define 
$$i\omega_{j}=i\omega_{F}+(A_{j}-\overline{A_{j}})\wedge\sigma\,,\quad\text{for $j=1,2$}\,.$$ Not that $d(A_1-\overline{A_1})=2(z+\overline{z})\,dz\wedge d\overline{z}$ and $d(A_2-\overline{A_2})=2dz\wedge d\overline{z}$. This implies that $$d\omega_{j}\neq 0\,\quad\text{and}\,\quad d(i\omega_{j})\wedge dz=0\,.$$One can see that $\omega^{l}_{j}\wedge dz\wedge d\overline{z}=\omega_{F}^{l}\wedge dz\wedge d\overline{z}\neq 0$ which implies that 
$$\rho_{j}=e^{i\omega_{j}}\wedge dz\,\,\,\,\,\text{for $j=1,2$}\,,$$ gives a GCS on $E$. One may write 
$$\rho_{j}=e^{B_{j}+i\omega_{F}}\wedge dz\,,\quad\text{where $B_{j}=(A_{j}+\overline{A_{j}})\wedge\sigma$ is a real $2$-form}\,.$$ Notice that $dA_{j}=-d\overline{A_{j}}\,.$ This shows that $dB_{j}=0$ for $j=1,2$. Hence each $\rho_{j}$ is equivalent to the product GCS.
 \end{example}
{\bf Acknowledgement.} The authors would like to thank Ajay Singh Thakur for many stimulating and helpful discussions and for sharing Example 3.3 with them. They also thank an anonymous referee for pointing out a serious error in an earlier draft which helped to improve the manuscript.
The research of the first-named author is supported by a CSIR-UGC NET research grant. The research of the second-named author is supported in part by a SERB MATRICS research grant, 
 MTR/2019/001613.


\begin{thebibliography}{}


\bibitem{Cavth} G. R. Cavalcanti: New aspects of the $dd^c$-lemma, DPhil thesis, University of Oxford, 2004.

\bibitem{CG} G. R. Cavalcanti, M. Gualtieri: Generalized complex structures on nilmanifolds, J. Symplectic Geom. {\bf 2}
(2004), 393--410.

\bibitem{Gua} M. Gualtieri: Generalized complex geometry, DPhil thesis, University of Oxford, 2003,
arXiv:math.DG/0401221.

\bibitem{Gua2} M. Gualtieri: Generalized complex geometry. Ann. of Math. (2) 174 (2011), no. 1, 75--123.


  \bibitem{angella} Daniele Angella, Simone Calamai, and Hisashi Kasuya, {Cohomologies of generalized complex manifolds and nilmanifolds}, J. Geom. Anal. \textbf{27} (2017), no.~1, 142--161. \MR{3606548}
 
  \bibitem{Cav05} Gil~R. Cavalcanti, {The decomposition of forms and cohomology of generalized complex manifolds}, Journal of Geometry and Physics \textbf{57} (2006), 121--132.

\bibitem{voisin02}Claire Voisin, \emph{Hodge theory and complex algebraic geometry. {I}}, Cambridge Studies in Advanced Mathematics, vol.~76, Cambridge University Press, Cambridge, 2002, Translated from the French original by Leila Schneps.\MR{1967689}.
  
\bibitem{hattori60}Akio Hattori, \emph{Spectral sequence in the de {R}ham cohomology of fibre bundles}, J. Fac. Sci. Univ. Tokyo Sect. I \textbf{8} (1960), 289--331 (1960). \MR{124918}.

\bibitem{Hit} N.J. Hitchin: Instantons and generalized K\"ahler geometry, Comm. Math. Phys. {\bf 265} (2006), 131-164.

 \bibitem{PT} M. Poddar and A. S. Thakur: Group actions, non-K\"ahler complex manifolds and SKT structures. Complex Manifolds {\bf 5} (2018), no. 1, 9--25.

\end{thebibliography}
\end{document}